\documentclass{article}
%%%%%%%%%%%%%%%%%%%%%%%%%%%%%%%%%%%%%%%%%%%%%%%%%%%%%%%%%%%%%%%%%%%%%%%%%%%%%%%%%%%%%%%%%%%%%%%%%%%%%%%%%%%%%%%%%%%%%%%%%%%%%%%%%%%%%%%%%%%%%%%%%%%%%%%%%%%%%%%%%%%%%%%%%%%%%%%%%%%%%%%%%%%%%%%%%%%%%%%%%%%%%%%%%%%%%%%%%%%%%%%%%%%%%%%%%%%%%%%%%%%%%%%%%%%%
\usepackage{amsmath}
\usepackage{fancyhdr}

\setcounter{MaxMatrixCols}{10}
%TCIDATA{OutputFilter=LATEX.DLL}
%TCIDATA{Version=5.50.0.2960}
%TCIDATA{<META NAME="SaveForMode" CONTENT="1">}
%TCIDATA{BibliographyScheme=Manual}
%TCIDATA{Created=Thursday, July 03, 2014 15:48:43}
%TCIDATA{LastRevised=Monday, April 23, 2018 17:03:46}
%TCIDATA{<META NAME="GraphicsSave" CONTENT="32">}
%TCIDATA{<META NAME="DocumentShell" CONTENT="Standard LaTeX\Blank - Standard LaTeX Article">}
%TCIDATA{Language=American English}
%TCIDATA{CSTFile=LaTeX article (bright).cst}

\newtheorem{theorem}{Theorem}

\newtheorem{definition}[theorem]{Definition}
\newtheorem{example}[theorem]{Example}

\newtheorem{lemma}[theorem]{Lemma}

\newtheorem{proposition}[theorem]{Proposition}
\newtheorem{remark}[theorem]{Remark}

\newenvironment{proof}[1][Proof]{\noindent\textbf{#1.} }{\ \rule{0.5em}{0.5em}}
\input{tcilatex}
\input{tcilatex}
\pagestyle{fancy}
\rhead{\today}

\begin{document}

\date{\today }
\title{On a class of submanifolds in a tangent bundle with a $g$-natural
metric - normal lift}
\author{Stanis\l aw Ewert-Krzemieniewski}
\maketitle

\begin{abstract}
An isometric immersion of a Riemannian manifold $M$ into a Riemannian
manifold $N$ gives rise in a natural way to variety of immersions into the
tangent bundle $TN$ with a non-degenerate $g$-natural metric $G.$ In the
paper we introduce and study an immersion into $TN$ defined by the immersion 
$f:M\longrightarrow N$ itself and the normal bundle.

\textbf{Mathematics Subject Classification }Primary 53B20, 53C07, secondary
53B21, 55C25.

\textbf{Key words}: Riemannian manifold, tangent bundle, $g$-natural metric,
submanifold, isometric immersion, totally geodesic distribution,
non-degenerate metric.
\end{abstract}

\section{Introduction}

An isometric immersion of a Riemannian manifold $M$ into a Riemannian
manifold $N$ gives rise in a natural way to variety of immersions into the
tangent bundle $TN$ with a non-degenerate $g$-natural metric $G.$ The
isometric immersion defined by the tangent bundle of the submanifold was
introduced by the author in \cite{arXiv:1411.3274}, \cite{Coll. Math.}. In
the present paper we introduce and study an immersion $\widetilde{f}%
:LM\longrightarrow TN$ defined by the immersion $f:M\longrightarrow N$
itself and the normal bundle.

In Preliminaries we recall basic facts on the decomposition of the tangent
bundle and $g$-natural metrics. We also present basic notions on
submanifolds and give short resum\'{e} on van der Waerden-Bortolotti
covariant derivative. In Section 3 basic equations are presented. The main
results are given in Section 4. We give the condition sufficient for $LM$
being totally geodesic submanifold of $TN.$

Throughout the paper all manifolds under consideration are Hausdorff and
smooth. The metrics on the base manifolds are Riemannian and the metrics on
tangent spaces are non-degenerate. We adopt the Einstein summation
convention.

\section{Preliminaries}

\subsection{Decomposition of the tangent space}

Let $\pi :TN\longrightarrow N$ be the tangent bundle of a Riemannian
manifold $N$ with the Levi-Civita connection $\nabla $ on $N,$ $\pi $ being
the projection. Then at each point $(x,u)\in TN$ the tangent space $%
T_{(x,u)}TN$ splits into direct sum of two isomorphic spaces $V_{(x,u)}TN$
and $H_{(x,u)}TN,$ where 
\begin{equation*}
V_{(x,u)}TN=Ker(d\pi |_{(x,u)}),\quad H_{(x,u)}TN=Ker(K|_{(x,u)})
\end{equation*}%
and $K$ is the connection map \cite{Dombr}, see also \cite{1}.

More precisely, if $Z=\left( Z^{r}\frac{\partial }{\partial x^{r}}+\overline{%
Z}^{r}\frac{\partial }{\partial u^{r}}\right) |_{(x,u)}\in T_{(x,u)}TN,$ $%
r=1,...,n,$ then the vertical and horizontal projections of $Z$ on $T_{x}N$
are given by%
\begin{equation*}
\left( d\pi \right) _{(x,u)}Z=Z^{r}\frac{\partial }{\partial x^{r}}%
|_{x},\quad K_{(x,u)}(Z)=\left( \overline{Z}^{r}+u^{s}Z^{t}\Gamma
_{st}^{r}\right) \frac{\partial }{\partial x^{r}}|_{x},
\end{equation*}%
where $\Gamma _{st}^{r}$ are components of the Levi-Civita connection on $N.$

On the other hand, to each vector field $X$ on $N$ there correspond uniquely
determined vector fields $X^{v}$ and $X^{h}$\ on $TN$ such that 
\begin{eqnarray*}
d\pi |_{(x,u)}(X^{v}) &=&0,\quad K|_{(x,u)}(X^{v})=X, \\
K|_{(x,u)}(X^{h}) &=&0,\quad d\pi |_{(x,u)}(X^{h})=X.
\end{eqnarray*}%
$X^{v}$ and $X^{h}$ are called the vertical lift and the horizontal lift of
a given $X$ to $TN,$ respectively.

In local coordinates $(\left( x^{r}\right) ,\left( u^{r}\right) ),$ $%
r=1,..,n,$ on $TN,$ the horizontal and vertical lifts of a vector field $%
X=X^{r}\frac{\partial }{\partial x^{r}}$ on $N$\ to $TN$ are vector fields
given respectively by%
\begin{equation}
X^{h}=X^{r}\frac{\partial }{\partial x^{r}}-u^{s}X^{t}\Gamma _{st}^{r}\frac{%
\partial }{\partial u^{r}},\quad X^{v}=X^{r}\frac{\partial }{\partial u^{r}}.
\label{Dec 4}
\end{equation}

Recall that for a given isometric immersion $f:M\longrightarrow N,$ we have
two tangent bundles $\pi _{N}:TN\longrightarrow N$ and $\pi
_{M}:TM\longrightarrow M,$ where the latter is the subbundle of the former.
Let $M,$ $N$ be two Riemannian manifolds with metrics $g_{M}$ and $g_{N}$
and Levi-Civita connections $\nabla _{M}$ and $\nabla _{N}$ respectively.
Then $T_{p}TM$ and $T_{p}TN$ have at a common point $p$ their own
decompositions into vertical and horizontal parts, i.e.%
\begin{equation*}
T_{p}TM=V_{p}TM\oplus H_{p}TM=V_{M}\oplus H_{M}
\end{equation*}%
and%
\begin{equation*}
T_{p}TN=V_{p}TN\oplus H_{p}TN=V_{N}\oplus H_{N},
\end{equation*}%
but neither $V_{M}\subset V_{N}$ nor $H_{M}\subset H_{N}$ need to hold along 
$TM.$ See also \cite{1}

Remark also that totally geodesic submanifolds of tangent bundle with $g$%
-natural metric\ are also studied in \cite{Abb Yamp} and \cite{Ewert 3}.

\subsection{Preliminaries on $g$-natural metrics}

In \cite{5} the class of $g$-natural metrics was defined. We have

\begin{lemma}
(\cite{5}, \cite{6}, \cite{7}) Let $(M,g)$ be a Riemannian manifold and $G$
be a $g$-natural metric on $TM.$ There exist functions $a_{j},$ $%
b_{j}:<0,\infty )\longrightarrow R,$ $j=1,2,3,$ such that for every $X,$ $Y,$
$u\in T_{x}M$%
\begin{multline*}
G_{(x,u)}(X^{h},Y^{h})=(a_{1}+a_{3})(r^{2})g_{x}(X,Y)+(b_{1}+b_{3})(r^{2})g_{x}(X,u)g_{x}(Y,u),
\\
G_{(x,u)}(X^{h},Y^{v})=G_{(x,u)}(X^{v},Y^{h})=a_{2}(r^{2})g_{x}(X,Y)+b_{2}(r^{2})g_{x}(X,u)g_{x}(Y,u),
\\
G_{(x,u)}(X^{v},Y^{v})=a_{1}(r^{2})g_{x}(X,Y)+b_{1}(r^{2})g_{x}(X,u)g_{x}(Y,u),
\end{multline*}%
where $r^{2}=g_{x}(u,u).$ For $\dim M=1$ the same holds for $b_{j}=0,$ $%
j=1,2,3.$
\end{lemma}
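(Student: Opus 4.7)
The plan is to reduce the classification to a pointwise problem and then invoke invariant theory. First I would exploit the canonical splitting $T_{(x,u)}TM = V_{(x,u)}TM \oplus H_{(x,u)}TM$: since both summands are identified with $T_xM$ via $K$ and $d\pi$, the metric $G_{(x,u)}$ is completely determined by three symmetric bilinear pairings on $T_xM$ parametrized by $u$, namely the horizontal-horizontal pairing $\alpha(X,Y;u)$, the mixed pairing $\beta(X,Y;u)$, and the vertical-vertical pairing $\gamma(X,Y;u)$. Establishing that $G$ is natural in $g$ means, at this level, that each of $\alpha,\beta,\gamma$ is equivariant with respect to the action of the isometry group of $(T_xM,g_x)$ on the triple $(X,Y,u)$.

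Second, I would invoke the classification of $O(n)$-equivariant smooth maps from an inner product space to its space of symmetric bilinear forms. By Weyl's first fundamental theorem for the orthogonal group, combined with Schwarz's theorem on smooth invariants, any such map has the shape
\begin{equation*}
A(r^{2})\,g_{x}(X,Y) + B(r^{2})\,g_{x}(X,u)\,g_{x}(Y,u),\qquad r^{2}=g_{x}(u,u),
\end{equation*}
with $A,B$ smooth functions of $r^2 \in [0,\infty)$. Applying this to each of $\alpha,\beta,\gamma$ produces six scalar functions. Relabelling them as $a_{1},b_{1}$ (for $\gamma$), $a_{2},b_{2}$ (for $\beta$), and $a_{1}+a_{3},b_{1}+b_{3}$ (for $\alpha$) matches the convention of references \cite{5}, \cite{6}, \cite{7}, in which $\alpha$ is written as $\gamma$ plus a horizontal correction so as to single out the Sasaki and Cheeger--Gromoll prototypes.

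Third, I would handle the one-dimensional case separately: when $\dim M = 1$, any two vectors are collinear with $u$, so $g_x(X,u)g_x(Y,u)=r^{2}g_x(X,Y)$ and the rank-one term is absorbed into the $a_j$ terms; hence one may set $b_j=0$ without loss of generality.

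The principal obstacle is the second step, i.e.\ the rigorous passage from the naturality of $G$ as a functorial construction to pointwise $O(n)$-equivariance and then to the closed form displayed above; here one must use a local model of $TM$ near $x$, normal coordinates, and the fact that the full pseudogroup of local isometries acts transitively with appropriate isotropy, so that smooth equivariant dependence on $u$ is forced through the single polynomial invariant $r^{2}$. Once this representation-theoretic input is in place, the rest of the lemma is a matter of bookkeeping and normalization.
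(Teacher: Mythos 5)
First, a point of reference: the paper does not prove this lemma at all. It is quoted verbatim from the cited classification results of Kowalski--Sekizawa and Abbassi--Sarih, so there is no internal proof to compare against; what you have written is a sketch of the proof that lives in those references, and your overall strategy (split $T_{(x,u)}TM$ into vertical and horizontal parts identified with $T_{x}M$, reduce to three $u$-dependent bilinear pairings, classify them by invariant theory for $O(n)$ via the first fundamental theorem together with Schwarz's theorem on smooth invariants, absorb the rank-one term when $\dim M=1$) is indeed the standard route. Steps one and three are fine as stated.

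The genuine gap is exactly where you locate it, in step two, but the mechanism you propose for closing it does not work. A generic Riemannian metric admits no nontrivial local isometries, so ``the full pseudogroup of local isometries acts transitively with appropriate isotropy'' is false for a fixed $(M,g)$ and cannot be the source of the $O(n)$-equivariance. The equivariance in the actual proof comes from naturality with respect to all local diffeomorphisms acting simultaneously on the metric and on the construction: one must (i) show the natural transformation $g\mapsto G$ has finite (here, first) order via a nonlinear Peetre-type argument or take first-order naturality as part of the definition, (ii) pass to the standard fibre of the relevant jet bundle, where the jet group acts, and (iii) use normal coordinates to normalize the $1$-jet of $g$ at $x$, which reduces the isotropy to $O(n)$ and, crucially, is what guarantees that $G_{(x,u)}$ depends only on $g_{x}$ and $u$ and carries no curvature or derivative-of-$g$ terms. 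Your sketch never explains why no such terms appear, and that is precisely the content of the theorem; without (i)--(iii) the invariant-theoretic step classifies the wrong object. One further loose end: naturality also forces the coefficient functions $a_{j},b_{j}$ to be universal, i.e.\ independent of the point $x$ and of the manifold, which should be recorded rather than absorbed silently into the ``relabelling'' step.
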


Setting $a_{1}=1,$ $a_{2}=a_{3}=b_{j}=0$ we obtain the Sasaki metric, while
setting $a_{1}=b_{1}=\frac{1}{1+r^{2}},$ $a_{2}=b_{2}=0=0,$ $a_{1}+a_{3}=1,$ 
$b_{1}+b_{3}=1$ we get the Cheeger-Gromoll one.

Following \cite{6} we put

\begin{enumerate}
\item $a(t)=a_{1}(t)\left( a_{1}(t)+a_{3}(t)\right) -a_{2}^{2}(t),$

\item $F_{j}(t)=a_{j}(t)+tb_{j}(t),$

\item $F(t)=F_{1}(t)\left[ F_{1}(t)+F_{3}(t)\right] -F_{2}^{2}(t)$

for all $t\in <0,\infty ).$
\end{enumerate}

We shall often abbreviate: $A=a_{1}+a_{3},$ $B=b_{1}+b_{3}.$

\begin{lemma}
\label{Lemma 9}(\cite{6}, Proposition 2.7) The necessary and sufficient
conditions for a $g$-natural metric $G$ on the tangent bundle of a
Riemannian manifold $(M,g)$ to be non-degenerate are 
\begin{equation*}
a(t)\neq 0,\quad F(t)\neq 0
\end{equation*}%
for all $t\in <0,\infty ).$ If $\dim M=1,$ this is equivalent to $a(t)\neq 0$
for all $t\in <0,\infty ).$

Moreover, $(TM,G)$ is Riemannian one if and only if%
\begin{equation*}
a(t)>0,\quad F(t)>0,\quad a_{1}(t)>0,\quad F_{1}(t)>0
\end{equation*}%
hold for all $t\in <0,\infty ).$
\end{lemma}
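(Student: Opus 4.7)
The plan is to compute the determinant (and, for the Riemannian case, the signature) of the Gram matrix of $G$ at an arbitrary point $(x,u)\in TM$ in a well-chosen basis, and read off when it is non-zero (respectively positive).

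First I would pick the basis. If $u\neq 0$, choose an orthonormal basis $\{e_1,\dots,e_n\}$ of $(T_xM,g_x)$ with $e_1=u/\|u\|$, so that $g_x(e_i,u)=r\delta_{i1}$ with $r^2=g_x(u,u)$. Then $\{e_1^{h},\dots,e_n^{h},e_1^{v},\dots,e_n^{v}\}$ is a basis of $T_{(x,u)}TM$ by the decomposition $T_{(x,u)}TM=H_{(x,u)}TM\oplus V_{(x,u)}TM$ recalled earlier. If $u=0$, then the $g_x(\cdot,u)$ terms in Lemma 1 vanish identically and the argument below simplifies (and the conclusion matches by continuity at $t=0$, since $F_j(0)=a_j(0)$ and $F(0)=a(0)$).

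Next I would compute the Gram matrix entries using Lemma 1. Because $g_x(e_i,u)g_x(e_j,u)=r^2\delta_{i1}\delta_{j1}$, all off-diagonal entries in $i,j$ vanish, and after reordering the basis as the pairs $(e_i^{h},e_i^{v})$ for $i=1,\dots,n$, the Gram matrix becomes block-diagonal with $2\times 2$ blocks. For the \textbf{radial} index $i=1$, the block is
\[
\begin{pmatrix} (a_1+a_3)+r^2(b_1+b_3) & a_2+r^2 b_2 \\ a_2+r^2 b_2 & a_1+r^2 b_1 \end{pmatrix}=\begin{pmatrix} F_1+F_3 & F_2 \\ F_2 & F_1 \end{pmatrix},
\]
of determinant $F_1(F_1+F_3)-F_2^2=F(r^2)$. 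For each \textbf{tangential} index $i>1$, the block is
\[
\begin{pmatrix} a_1+a_3 & a_2 \\ a_2 & a_1 \end{pmatrix},
\]
of determinant $a_1(a_1+a_3)-a_2^2=a(r^2)$. Hence
\[
\det G_{(x,u)}=F(r^2)\,a(r^2)^{n-1}.
\]

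From this $G$ is non-degenerate at every $(x,u)$ iff $a(t)\neq 0$ and $F(t)\neq 0$ for all $t\geq 0$, which is the first assertion; when $\dim M=1$ the tangential blocks are absent and (since $b_j=0$) $F\equiv a$, giving the stated reduction. For the Riemannian statement I would apply Sylvester's criterion block by block: the $a$-block is positive definite iff $a_1>0$ and $a>0$, and the $F$-block iff $F_1>0$ and $F>0$; conversely these four inequalities force positive definiteness of the full block-diagonal matrix.

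The only real obstacle is bookkeeping: one must verify that the chosen orthonormal basis choice is harmless (the determinant, considered as a function on $T_{(x,u)}TM$ via an orthonormal frame, is independent of the frame), and treat $u=0$ by continuity or separately so that the conditions are formulated uniformly on $[0,\infty)$. Once these are in place, the computation of the two $2\times 2$ determinants is direct and the lemma follows.
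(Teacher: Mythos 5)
Your proof is correct. Note, however, that the paper itself offers no argument for this lemma: it is quoted verbatim from the literature (Abbassi--Sarih, Proposition 2.7 of reference [2]/\cite{6}), so there is no in-paper proof to compare against. Your block-diagonalization in an adapted orthonormal frame with $e_1=u/\|u\|$ is the standard way to establish it, and the details check out: the off-diagonal $2\times 2$ blocks vanish, the radial block has determinant $F(r^2)$, each tangential block has determinant $a(r^2)$, the case $u=0$ is consistent because $F_j(0)=a_j(0)$ forces $F(0)=a(0)$, and Sylvester's criterion applied blockwise gives exactly the four positivity conditions in the Riemannian case. The only point worth making explicit is the quantifier step: since $r^2=g_x(u,u)$ sweeps out all of $[0,\infty)$ as $(x,u)$ ranges over $TM$, pointwise non-degeneracy everywhere is equivalent to $a(t)\neq 0$ and $F(t)\neq 0$ for every $t\geq 0$, which you use implicitly in passing from the determinant formula to the stated conditions.
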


We also have

\begin{proposition}
\label{g-natural connection}(\cite{7-1}, \cite{7-2}) Let $(N,g)$ be a
Riemannian manifold, $\nabla $ its Levi-Civita connection and $R$ its
Riemann curvature tensor. If $G$ is a non-degenerate $g$-natural metric on $%
TN,$ then the Levi-Civita connection $\widetilde{\nabla }$ of $(TN,G)$ is
given at a point $(x,u)\in TN$ by%
\begin{equation*}
\left( \widetilde{\nabla }_{X^{h}}Y^{h}\right) _{(x,u)}=\left( \nabla
_{X}Y\right) _{(x,u)}^{h}+h\left\{ \mathbf{A}(u,X_{x},Y_{x})\right\}
+v\left\{ \mathbf{B}(u,X_{x},Y_{x})\right\} ,
\end{equation*}%
\begin{equation*}
\left( \widetilde{\nabla }_{X^{h}}Y^{v}\right) _{(x,u)}=\left( \nabla
_{X}Y\right) _{(x,u)}^{v}+h\left\{ \mathbf{C}(u,X_{x},Y_{x})\right\}
+v\left\{ \mathbf{D}(u,X_{x},Y_{x})\right\} ,
\end{equation*}%
\begin{equation*}
\left( \widetilde{\nabla }_{X^{v}}Y^{h}\right) _{(x,u)}=h\left\{ \mathbf{C}%
(u,Y_{x},X_{x})\right\} +v\left\{ \mathbf{D}(u,Y_{x},X_{x})\right\} ,
\end{equation*}%
\begin{equation*}
\left( \widetilde{\nabla }_{X^{v}}Y^{v}\right) _{(x,u)}=h\left\{ \mathbf{E}%
(u,X_{x},Y_{x})\right\} +v\left\{ \mathbf{F}(u,X_{x},Y_{x})\right\} ,
\end{equation*}%
where $\mathbf{A},$ $\mathbf{B},$ $\mathbf{C},$ $\mathbf{D},$ $\mathbf{E},$ $%
\mathbf{F}$ are some F-tensors defined on the product $TN\otimes TN\otimes
TN.$
\end{proposition}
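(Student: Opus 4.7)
The plan is to apply the Koszul formula for the Levi-Civita connection of $G$, paired with test vector fields that are horizontal or vertical lifts, and to extract the horizontal and vertical components of $\widetilde{\nabla}_{\mathcal{X}}\mathcal{Y}$ from the resulting linear system, exploiting the non-degeneracy provided by Lemma \ref{Lemma 9}. Concretely, for each of the four cases $\mathcal{X},\mathcal{Y}\in\{X^h,X^v\}\times\{Y^h,Y^v\}$, I would write
$$2G(\widetilde{\nabla}_{\mathcal{X}}\mathcal{Y},\mathcal{Z})=\mathcal{X}G(\mathcal{Y},\mathcal{Z})+\mathcal{Y}G(\mathcal{X},\mathcal{Z})-\mathcal{Z}G(\mathcal{X},\mathcal{Y})+G([\mathcal{X},\mathcal{Y}],\mathcal{Z})-G([\mathcal{X},\mathcal{Z}],\mathcal{Y})-G([\mathcal{Y},\mathcal{Z}],\mathcal{X})$$
and let $\mathcal{Z}=Z^h$ and then $\mathcal{Z}=Z^v$. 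Pointwise at $(x,u)$ this gives a $2\times 2$ block system for the $H$- and $V$-components of $\widetilde{\nabla}_{\mathcal{X}}\mathcal{Y}$ whose matrix is the Gram matrix of $G$ on $H_{(x,u)}TN\oplus V_{(x,u)}TN$; by Lemma \ref{Lemma 9} this matrix is invertible precisely when $a(t),F(t)\neq 0$, i.e.\ exactly under the non-degeneracy hypothesis.

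The right-hand sides are made explicit using three ingredients: the standard bracket formulas for lifts
$$[X^h,Y^h]_{(x,u)}=[X,Y]^h_{(x,u)}-\bigl(R(X,Y)u\bigr)^v_{(x,u)},\quad [X^h,Y^v]_{(x,u)}=(\nabla_XY)^v_{(x,u)},\quad [X^v,Y^v]=0,$$
which follow directly from the local expressions in (\ref{Dec 4}); the derivation rules $X^h(r^2)=0$ and $X^v(r^2)=2g(X,u)$ for $r^2=g(u,u)$; and the fact that $X^h$ acts on $g(Y,Z)\circ\pi$ as $g(\nabla_XY,Z)+g(Y,\nabla_XZ)$, because the horizontal lift of a vector field corresponds to parallel transport along the base. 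Plugging these into the formulas of Lemma 1 for $G$ on pairs of lifts reduces every term of the Koszul identity to a combination of $g$, $\nabla$, $R$, the point values of $X,Y,u$, and the scalars $a_j(r^2),b_j(r^2)$ together with their derivatives.

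Solving the $2\times 2$ system in each of the four cases delivers the horizontal and vertical parts of $\widetilde{\nabla}_{\mathcal{X}}\mathcal{Y}$ as expressions that are bilinear in $X_x,Y_x$ and smooth in $u$; these are exactly the F-tensors $\mathbf{A},\mathbf{B},\mathbf{C},\mathbf{D},\mathbf{E},\mathbf{F}$ of the statement. The base-connection terms $(\nabla_XY)^h$ and $(\nabla_XY)^v$ visible in the first two formulas come from isolating the $[X^h,Y^h]$ and $[X^h,Y^v]$ contributions. The symmetric structure of the Koszul identity together with torsion-freeness forces $\widetilde{\nabla}_{X^v}Y^h=\widetilde{\nabla}_{Y^h}X^v+[X^v,Y^h]$, and $[X^v,Y^h]=-(\nabla_Y X)^v$ cancels the base-connection term arising from the second formula applied to $(Y,X)$; this is why the tensors in the third case appear as $\mathbf{C}(u,Y_x,X_x)$, $\mathbf{D}(u,Y_x,X_x)$ rather than as new tensors.

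The main obstacle is not conceptual but a matter of disciplined bookkeeping: six scalar coefficients $a_j,b_j$ and their $r^2$-derivatives enter each computation, and the curvature $R$ enters through $[X^h,Y^h]$. The only genuinely delicate point is to verify that, after the inversion, every occurrence of $X,Y$ collapses into a combination that is tensorial in $X_x,Y_x$ (so that the answers really do define F-tensors and not merely differential operators); once this is checked, Lemma \ref{Lemma 9} guarantees existence and uniqueness of the six F-tensors $\mathbf{A}$ through $\mathbf{F}$.
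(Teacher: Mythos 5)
The paper itself offers no proof of this proposition: it is quoted from \cite{7-1} and \cite{7-2}, with the explicit expressions for $\mathbf{A},\dots ,\mathbf{F}$ deliberately omitted (see the Remark that follows it) because the original formulas contain misprints. Your Koszul-formula derivation is correct in outline and is essentially the argument used in those cited sources: the bracket identities for lifts, the action of $X^{h}$ and $X^{v}$ on the coefficient functions $a_{j}(r^{2}),b_{j}(r^{2})$ and on $g(Y,u)$, and the invertibility of the Gram matrix of $G$ on $H_{(x,u)}TN\oplus V_{(x,u)}TN$ supplied by Lemma \ref{Lemma 9} are exactly the needed ingredients, so there is nothing of substance to compare against the paper.
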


\begin{remark}
Expressions for $\mathbf{A},$ $\mathbf{B},$ $\mathbf{C},$ $\mathbf{D},$ $%
\mathbf{E},$ $\mathbf{F}$ were presented for the first time in the original
papers (\cite{6}, \cite{7}). Unfortunately, they contain some misprints and
omissions. Therefore, for the correct form, we refer the reader to (\cite%
{7-1}, \cite{7-2}), see also (\cite{8}, \cite{11}).
\end{remark}

\subsection{Submanifolds}

Let $M$ be a manifold isometrically immersed in a pseudo-Riemannian manifold 
$N$ with metric $g.$ Denote by $\widetilde{\nabla }$ and $\nabla $ the
Levi-Civita connections of the metric $g$ on $N$ and that of the induced
metric on $M$ and by $D^{\perp }$ the connection induced in the normal
bundle $T^{\bot }M.$ Then the Gauss and Weingarten equations

\begin{equation*}
\widetilde{\nabla }_{X}Y=\nabla _{X}Y+H(X,Y),
\end{equation*}%
\begin{equation*}
\widetilde{\nabla }_{X}\eta =-A_{\eta }X+D_{X}^{\bot }Y,
\end{equation*}%
hold for all vectors fields $X,$ $Y$ tangent to $M$ and all vector fields $%
\eta $ normal to $M.$ Here $H(X,Y)$ is the second fundamental form which is
symmetric and takes values in $T^{\bot }M$\ while\ $A_{\eta }X$ is the shape
operator taking values in $TM.$ It is well known that $A_{\eta }$ and $H$
are related by%
\begin{equation*}
g(A_{\eta }X,Y)=g(\eta ,H(X,Y)).
\end{equation*}

$M$ is said to be totally geodesic if $H(X,Y)=0$ for all $X,$ $Y\in TM.$

For the local immersion $x^{r}=x^{r}(y^{a}),$ $r=1,...,n,$ $a=1,...,m,$ the
components of the Levi-Civita connection $\nabla $ of the induced metric $%
g_{ab}=g_{rs}B_{a}^{r}B_{b}^{s},$ $B_{a}^{r}=\frac{\partial x^{r}}{\partial
y^{a}},$ are%
\begin{equation*}
\Gamma _{ab}^{c}=\left[ B_{a.b}^{r}+\Gamma _{st}^{r}B_{a}^{s}B_{b}^{t}\right]
B_{r}^{c},\quad B_{r}^{c}=g^{cd}B_{d}^{t}g_{tr},
\end{equation*}%
where the dot denotes partial derivative with respect to $y^{b}.$

Similarly, the components of the connection $D^{\perp }$ are%
\begin{equation*}
\Gamma _{ay}^{x}=\left[ N_{y.a}^{r}+\Gamma _{st}^{r}B_{a}^{s}N_{y}^{t}\right]
N_{r}^{x},\quad N_{r}^{x}=g^{xy}N_{y}^{t}g_{tr},
\end{equation*}%
where $\eta _{z}=N_{z}^{r}\frac{\partial }{\partial x^{r}},$ $z=m+1,...,n$
are unit vector fields normal to $M.$

\subsubsection{Van der Waerden-Bortolotti covariant derivative}

Van der Waerden-Bortolotti covariant derivative $\overline{\nabla }$ is a
covariant differentiation of tensor fields of mixed types defined along a
submanifold $M$ isometrically immersed in a pseudo-Riemannian manifold $%
(N,g) $ and can be considered as a direct sum $\widetilde{\nabla }\oplus
\nabla \oplus \nabla ^{\bot }$ of the Levi-Civita connections of the metric $%
g$ on $N$, the one induced on $M$ and of the metric induced in normal bundle 
$T^{\bot }M.$

If $\widetilde{X},$ $X$ and $\eta $ are vector fields, respectively, tangent
to $N,$ tangent to $M$, normal to $M$ and $\widetilde{X}^{\ast },$ $X^{\ast
} $, $\eta ^{\ast }$ are respective 1-forms, then,%
\begin{multline*}
(\overline{\nabla }_{Y}T)\left( \widetilde{X},X,\eta ,\widetilde{X}^{\ast
},X^{\ast },\eta ^{\ast }\right) =Y\left( T\left( \widetilde{X},X,\eta ,%
\widetilde{X}^{\ast },X^{\ast },\eta ^{\ast }\right) \right) - \\
T\left( \widetilde{\nabla }_{Y}\widetilde{X},X,\eta ,\widetilde{X}^{\ast
},X^{\ast },\eta ^{\ast }\right) -T\left( \widetilde{X},X,\eta ,\widetilde{%
\nabla }_{Y}\widetilde{X}^{\ast },X^{\ast },\eta ^{\ast }\right) - \\
T\left( \widetilde{X},\nabla _{Y}X,\eta ,\widetilde{X}^{\ast },X^{\ast
},\eta ^{\ast }\right) -T\left( \widetilde{X},X,\eta ,\widetilde{X}^{\ast
},\nabla _{Y}X^{\ast },\eta ^{\ast }\right) - \\
T\left( \widetilde{X},X,\nabla _{Y}^{\bot }\eta ,\widetilde{X}^{\ast
},X^{\ast },\eta ^{\ast }\right) -T\left( \widetilde{X},X,\eta ,\widetilde{X}%
^{\ast },X^{\ast },\nabla _{Y}^{\bot }\eta ^{\ast }\right)
\end{multline*}%
for any vector field $Y$ tangent to $M$ and tensor field $T$ of mixed type $%
(3,3).$

Let $x^{k}=x^{k}(y^{a})$ be the local expression of the immersion, $%
B_{a}^{k}=\frac{\partial x^{k}}{\partial y^{a}}.$ Let $\eta _{x}=N_{x}^{r}%
\frac{\partial }{_{\partial x^{r}}},$ $x=m+1,...,n,$ be an orthonormal set
of vectors normal to $M.$

For the local coordinate vector fields $\frac{\partial }{\partial x^{k}}$
tangent to $N,$ $\frac{\partial }{\partial y^{a}}$ tangent to $M$, $\frac{%
\partial }{\partial v^{x}}$ normal to $M$ and the respective 1-forms $%
dx^{k}, $ $dy^{a},$ $dv^{x},$ denote by $\Gamma _{hk}^{l},$ $\Gamma
_{ab}^{c},$ $\Gamma _{ay}^{z}$ components of the connections $\widetilde{%
\nabla },$ $\nabla $ and $\nabla ^{\bot }.$ If 
\begin{equation*}
T=T_{hax}^{kby}\frac{\partial }{\partial x^{k}}\otimes dx^{h}\otimes \frac{%
\partial }{\partial y^{b}}\otimes dy^{a}\otimes \frac{\partial }{\partial
v^{y}}\otimes dv^{x},
\end{equation*}%
then%
\begin{multline*}
\nabla _{c}T_{hax}^{kby}=\partial _{b}T_{hax}^{kby}-\Gamma
_{hr}^{s}B_{c}^{r}T_{sax}^{kby}+\Gamma _{rs}^{k}B_{c}^{r}T_{hax}^{sby}- \\
\Gamma _{ca}^{d}T_{hdx}^{kby}+\Gamma _{cd}^{b}T_{hax}^{kdy}-\Gamma
_{cx}^{z}T_{haz}^{kby}+\Gamma _{cz}^{y}T_{hax}^{kbz},
\end{multline*}%
where $h,k,r,s=1,...,n,$ $a,b,c,d=1,...,m,$ $m<n,$ and $x,y,z=m+1,...,n.$

In particular, $\overline{\nabla }_{a}B_{b}^{r}$ and $\overline{\nabla }%
_{a}N_{x}^{r}$ give rise to the components of the second fundamental form
and the shape operator:%
\begin{equation*}
\overline{\nabla }_{b}B_{a}^{r}\partial _{r}=\left( B_{a.b}^{r}+\Gamma
_{st}^{r}B_{a}^{s}B_{b}^{t}-\Gamma _{ab}^{c}B_{c}^{r}\right) \partial
_{r}=h_{ab}^{z}N_{z}^{r}\partial _{r},
\end{equation*}%
\begin{equation}
\overline{\nabla }_{a}N_{x}^{r}\partial _{r}=\left( \partial
_{a}N_{x}^{r}+\Gamma _{st}^{r}B_{a}^{s}N_{x}^{t}-\Gamma
_{ax}^{y}N_{y}^{r}\right) \partial _{r}.  \label{WB 5}
\end{equation}%
In a free of coordinate notation we have respectively:%
\begin{equation*}
\overline{\nabla }_{X}Y=\widetilde{\nabla }_{X}Y-\nabla _{X}Y,
\end{equation*}

\begin{equation*}
\overline{\nabla }_{X}\eta =\widetilde{\nabla }_{X}\eta -D_{X}^{\bot }\eta .
\end{equation*}%
See also \cite{Yano Kon} and \cite{Yano}.

\subsubsection{Isometric immersion defined by normal bundle}

Let $f:M\longrightarrow N$ be an isometric immersion of a Riemannian
manifold $M$ into a Riemannian manifold $N.$ Suppose that the following
diagram commute 
\begin{equation*}
\begin{array}{ccccc}
(\pi _{N}^{-1}(U),(x^{r},u^{r}))~~TN & \boldsymbol{\longleftarrow -} & 
\widetilde{f} & \boldsymbol{--} & (\left( \pi _{M}^{\bot }\right)
^{-1}(V),(y^{a},v^{a}))~~T^{\bot }M \\ 
\begin{array}{c}
\boldsymbol{|} \\ 
\boldsymbol{|}%
\end{array}
&  &  &  & 
\begin{array}{c}
\boldsymbol{|} \\ 
\boldsymbol{|}%
\end{array}
\\ 
\pi _{N} &  &  &  & \pi _{M}^{\bot } \\ 
\begin{array}{c}
\boldsymbol{|} \\ 
\boldsymbol{\downarrow }%
\end{array}
&  &  &  & 
\begin{array}{c}
\boldsymbol{|} \\ 
\boldsymbol{\downarrow }%
\end{array}
\\ 
(U,(x^{r}))~~N & \boldsymbol{\longleftarrow -} & f & \boldsymbol{--} & 
(V,(y^{a}))~~M%
\end{array}%
,
\end{equation*}%
where $(U,(x^{r}))$ and $(V,(y^{a}))$ are coordinate neighbourhoods on $N$
and $M$ respectively, while the local expression for $f$ is:%
\begin{equation*}
f:x^{r}=x^{r}(y^{a}).
\end{equation*}

Let 
\begin{equation*}
\eta _{x}=N_{x}^{r}\frac{\partial }{\partial x^{r}},\qquad x=m+1,...,n
\end{equation*}%
be a set of orthonormal vectors normal to $M.$

The coordinate neighbourhoods on $TN$ and normal bundle $T^{\bot }M$ are
defined respectively by%
\begin{equation*}
\left( (x^{r}),(u^{r})\right) ,\qquad r=1,...,n,
\end{equation*}%
\begin{equation*}
\left( \left( y^{a}\right) ,(v^{x})\right) ,\qquad x=m+1,...,n,\quad
a=1,...,m,
\end{equation*}%
where $(u^{r})_{r=1,...,n},$ are components of the vector $u$ tangent to $N$
at a point with coordinates $(x^{r})$ and $(v^{x})_{x=m+1,...,n}$ are
components of the vector normal to $M$ at a point $x^{r}=x^{r}(y^{a}).$

If%
\begin{equation*}
f:M\longrightarrow N;\qquad \left( y^{a}\right) \mapsto x^{r}(y^{a})
\end{equation*}%
then

\begin{equation}
\widetilde{f}:x^{r}=x^{r}(y^{a}),\quad u^{r}=v^{x}N_{x}^{r}  \label{imersion}
\end{equation}%
defines locally an immersion into $TN.$

\subsubsection{Vectors tangent to $LM\label{vectors tangent to LM}$}

The coordinate vector fields tangent to $LM=\widetilde{f}(TM^{\perp })$ are%
\begin{equation*}
\frac{\partial }{\partial v^{x}}=N_{x}^{r}\frac{\partial }{\partial u^{r}}%
=\left( \eta _{x}\right) ^{v},
\end{equation*}

\begin{multline*}
\frac{\partial }{\partial y^{a}}=B_{a}^{r}\frac{\partial }{\partial x^{r}}%
+v^{z}\partial _{a}N_{z}^{r}\frac{\partial }{\partial u^{r}}\overset{(\ref%
{WB 5})}{=} \\
B_{a}^{r}\left( \frac{\partial }{\partial x^{r}}-v^{z}N_{z}^{t}\Gamma
_{tr}^{s}\frac{\partial }{\partial u^{s}}\right) +v^{z}\overline{\nabla }%
_{a}N_{z}^{r}\frac{\partial }{\partial u^{r}}+v^{z}\Gamma _{az}^{y}N_{y}^{r}%
\frac{\partial }{\partial u^{r}}\overset{(\ref{imersion}),(\ref{Dec 4})}{=}
\\
\left( \frac{\delta }{\delta y^{a}}\right) ^{h}+M_{a}^{r}\left( \frac{%
\partial }{\partial x^{r}}\right) ^{v}+N_{a}^{r}\left( \frac{\partial }{%
\partial x^{r}}\right) ^{v}= \\
\left( \frac{\delta }{\delta y^{a}}\right) ^{h}+\left( M_{a}\right)
^{v}+\left( N_{a}\right) ^{v},
\end{multline*}%
where $M_{a}=v^{z}\overline{\nabla }_{a}N_{z}^{r}\frac{\partial }{\partial
x^{r}}=v^{z}M_{az}$ are tangent to $M$ and $N_{a}=v^{z}\Gamma
_{az}^{y}N_{y}^{r}\frac{\partial }{\partial x^{r}}=N_{a}^{y}N_{y}^{r}\frac{%
\partial }{\partial x^{r}}$ are normal to $M.$

Along $M$ we also have%
\begin{equation*}
\nabla _{\delta _{a}}u=\nabla _{\delta _{a}}(v^{y}\eta _{y})=\nabla _{\delta
_{a}}\left( v^{y}N_{y}^{r}\frac{\partial }{\partial x^{r}}\right)
=M_{a}+N_{a}.
\end{equation*}

\section{Basic equations}

In this section we derive, using the equations of Gauss and Weingarten and
the formulas for the Levi-Civita connection on $(TN,G)$ with a
non-degenerate $g$-natural metric $G,$ the basic equations for the immersion
given by (\ref{imersion}) to be used throughout the paper. $H^{h}+V^{v}$ is
a unique decomposition of a vector field normal to $LM$ into its horizontal
and vertical parts, where $H$ and $V$ are vector fields along $M,$ not
necessary tangent to $M.$ $\nabla $ and $\widetilde{\nabla }$ denote the
Levi-Civita connections of the metric $g$ and $g$-natural non-degenerate
metric $G,$ respectively. $\widetilde{H}$ is the second fundamental form of
the immersion (\ref{imersion}). Finally, $R$ stands for the Riemann
curvature tensor of $g.$ The computations in this section were performed and
checked with Mathematica$^{\registered }$ software. In virtue of Proposition %
\ref{Prop 5}, the pairs of equations in each subsection must satisfy%
\begin{equation}
G(\widetilde{H}(\partial _{x},\partial _{y}),H^{h}+V^{v})-G(\widetilde{A}%
_{H^{h}+V^{v}}\partial _{x},\partial _{y})=0.  \label{second shape}
\end{equation}%
It also can be used to verify the correctness of computations.

\paragraph{Equation 1}

\begin{multline*}
G(\widetilde{\nabla }_{\partial _{x}}\partial _{y},H^{h}+V^{v})=G(\widetilde{%
H}(\partial _{x},\partial _{y}),H^{h}+V^{v})= \\
G(\widetilde{\nabla }_{\eta _{x}^{v}}\eta _{y}^{v},H^{h}+V^{v})= \\
G(h\{\mathbf{E(}u,\eta _{x},\eta _{y})\}+v\{\mathbf{F}(u,\eta _{x},\eta
_{y})\},H^{h}+V^{v})=
\end{multline*}%
\begin{multline}
b_{2}g(\eta _{x},\eta _{y})g(u,H)+(b_{1}-a_{1}^{\prime })g(\eta _{x},\eta
_{y})g(u,V)+  \label{Eq 52} \\
a_{1}^{\prime }g(\eta _{x},V)g(u,\eta _{y})+a_{1}^{\prime }g(\eta
_{y},V)g(u,\eta _{x})+ \\
\left( a_{2}^{\prime }+\frac{b_{2}}{2}\right) \left( g(\eta _{x},H)g(u,\eta
_{y})+g(\eta _{y},H)g(u,\eta _{x})\right) + \\
g(u,\eta _{x})g(u,\eta _{y})g(u,b_{1}^{\prime }V+2b_{2}^{\prime }H).
\end{multline}%
\begin{multline*}
G(\widetilde{\nabla }_{\partial _{x}}(H^{h}+V^{v}),\partial _{y})=G(-%
\widetilde{A}_{H^{h}+V^{v}}\partial _{x},\partial _{y})= \\
G(h\{\mathbf{C}(u,H,\eta _{x})\}+v\{\mathbf{D}(u,H,\eta _{x})\}+h\{\mathbf{E}%
(u,\eta _{x},V)\}+v\{\mathbf{F}(u,\eta _{x},V)\},\eta _{y}^{v})=
\end{multline*}%
\begin{multline*}
(b_{1}-a_{1}^{\prime })g(\eta _{x},V)g(u,\eta _{y})+a_{1}^{\prime }g(\eta
_{y},V)g(u,\eta _{x})+a_{1}^{\prime }g(\eta _{x},\eta _{y})g(u,V)+ \\
\left( a_{2}^{\prime }-\frac{b_{2}}{2}\right) \left( g(\eta _{y},H)g(u,\eta
_{x})-g(\eta _{x},H)g(u,\eta _{y})\right) + \\
b_{1}^{\prime }g(u,\eta _{x})g(u,\eta _{y})g(u,V).
\end{multline*}%
In virtue of the equality (\ref{second shape}) the above two equations yield%
\begin{equation*}
g(u,\eta _{x})g(\eta _{y},T)-g(u,\eta _{y})g(\eta _{x},T)=0,
\end{equation*}%
where $T=(b_{1}^{\prime }-2a_{1}^{\prime })V+(b_{2}^{\prime }-2a_{2}^{\prime
})H.$

\paragraph{Equation 2}

\begin{multline*}
G(\widetilde{\nabla }_{\partial _{x}}\partial _{a},H^{h}+V^{v})=G(\widetilde{%
H}(\partial _{x},\partial _{a}),H^{h}+V^{v})= \\
G(\widetilde{\nabla }_{\eta _{x}^{v}}(\delta
_{a}^{h}+M_{a}^{v}+N_{a}^{v}),H^{h}+V^{v})= \\
G(h\{\mathbf{C}(u,\delta _{a},\eta _{x})\}+v\{\mathbf{D}(u,\delta _{a},\eta
_{x})\},H^{h}+V^{v})+ \\
G(h\{\mathbf{E(}u,\eta _{x},M_{a}+N_{a})\}+v\{\mathbf{F}(u,\eta
_{x},M_{a}+N_{a})\},H^{h}+V^{v})=
\end{multline*}%
\begin{multline}
-\frac{1}{2}a_{1}R(H,\delta _{a},u,\eta _{x})+A^{\prime }g(H,\delta
_{a})g(u,\eta _{x})+\left( a_{2}^{\prime }-\frac{b_{2}}{2}\right) g(V,\delta
_{a})g(u,\eta _{x})+  \label{Eq 62} \\
(b_{1}-a_{1}^{\prime })g(u,V)g(N_{a},\eta _{x})+b_{2}g(u,H)g(N_{a},\eta
_{x})+ \\
a_{1}^{\prime }g(u,\eta _{x})g(V,M_{a}+N_{a})+a_{1}^{\prime
}g(u,N_{a})g(V,\eta _{x})+ \\
\left( a_{2}^{\prime }+\frac{b_{2}}{2}\right) \left[ g(H,M_{a}+N_{a})g(u,%
\eta _{x})+g(H,\eta _{x})g(u,N_{a})\right] + \\
g(u,b_{1}^{\prime }V+2b_{2}^{\prime }H)g(u,\eta _{x})g(u,N_{a}).
\end{multline}

\begin{multline*}
G(\widetilde{\nabla }_{\partial _{x}}(H^{h}+V^{v}),\partial _{a})=G(-%
\widetilde{A}_{H^{h}+V^{v}}\partial _{x},\partial _{a})= \\
G(\widetilde{\nabla }_{\partial _{x}}(H^{h}+V^{v}),\delta
_{a}^{h}+M_{a}^{v}+N_{a}^{v})= \\
G(h\{\mathbf{C}(u,H,\eta _{x})\}+v\{\mathbf{D}(u,H,\eta _{x})\},\delta
_{a}^{h}+M_{a}^{v}+N_{a}^{v})+ \\
G(h\{\mathbf{E}(u,\eta _{x},V)\}+v\{\mathbf{F}(u,\eta _{x},V)\},\delta
_{a}^{h}+M_{a}^{v}+N_{a}^{v})=
\end{multline*}%
\begin{multline}
\frac{1}{2}a_{1}R(H,\delta _{a},u,\eta _{x})+(b_{1}-a_{1}^{\prime
})g(u,N_{a})g(V,\eta _{x})+a_{1}^{\prime }g(u,\eta _{x})g(V,M_{a}+N_{a})+
\label{Eq 58} \\
a_{1}^{\prime }g(u,V)g(N_{a},\eta _{x})+\left( a_{2}^{\prime }+\frac{b_{2}}{2%
}\right) g(u,\eta _{x})g(V,\delta _{a})+ \\
\left( a_{2}^{\prime }-\frac{b_{2}}{2}\right) \left[ g(H,M_{a}+N_{a})g(u,%
\eta _{x})-g(H,\eta _{x})g(u,N_{a})\right] + \\
A^{\prime }g(H,\delta _{a})g(u,\eta _{x})+b_{1}^{\prime }g(u,\eta
_{x})g(u,N_{a})g(u,V).
\end{multline}

Hence, in virtue of (\ref{second shape}), we get%
\begin{multline*}
g(u,b_{1}V+b_{2}H)g(N_{a},\eta _{x})+2g(u,\eta _{x})g(A^{\prime
}H+a_{2}^{\prime }V,\delta _{a})+ \\
2g(u,\eta _{x})g(a_{1}^{\prime }V+a_{2}^{\prime }H,M_{a}+N_{a})+ \\
g(u,N_{a})\left[ g(b_{1}V+b_{2}H,\eta _{x})+2g(u,\eta _{x})g(b_{1}^{\prime
}V+b_{2}^{\prime }H,u)\right] =0.
\end{multline*}

\paragraph{Equation 3}

\begin{multline*}
G(\widetilde{\nabla }_{\partial _{a}}\partial _{x},H^{h}+V^{v})=G(\widetilde{%
H}(\partial _{a},\partial _{x}),H^{h}+V^{v})= \\
G(\widetilde{\nabla }_{(\delta _{a}^{h}+M_{a}^{v}+N_{a}^{v})}(\eta
_{x})^{v},H^{h}+V^{v})=
\end{multline*}%
\begin{multline*}
G(\left( \nabla _{\delta _{a}}\eta _{x}\right) ^{v}+h\{\mathbf{C}(u,\delta
_{a},\eta _{x})\}+v\{\mathbf{D}(u,\delta _{a},\eta _{x})\},H^{h}+V^{v})+ \\
G(h\{\mathbf{E(}u,\eta _{x},M_{a}+N_{a})\}+v\{\mathbf{F}(u,\eta
_{x},M_{a}+N_{a})\},H^{h}+V^{v}).
\end{multline*}%
Since $\widetilde{H}(\partial _{a},\partial _{x})$ is symmetric, comparing
the last equation with (\ref{Eq 62}), we obtain%
\begin{equation}
G\left( \left( \nabla _{\delta _{a}}\eta _{x}\right) ^{v},H^{h}+V^{v}\right)
=0.  \label{Eq 18}
\end{equation}

\begin{multline*}
G(\widetilde{\nabla }_{\partial _{a}}(H^{h}+V^{v}),\partial _{x})=G(-%
\widetilde{A}_{H^{h}+V^{v}}\partial _{a},\partial _{x})= \\
G(\widetilde{\nabla }_{(\delta
_{a}^{h}+M_{a}^{v}+N_{a}^{v})}(H^{h}+V^{v}),\eta _{x}^{v})= \\
G(\left( \nabla _{\delta _{a}}H\right) ^{h}+h\{\mathbf{A}(u,\delta
_{a},H)\}+v\{\mathbf{B}(u,\delta _{a},H)\},\eta _{x}^{v})+ \\
G(\left( \nabla _{\delta _{a}}V\right) ^{v}+h\{\mathbf{C}(u,\delta
_{a},V)\}+v\{\mathbf{D}(u,\delta _{a},V)\},\eta _{x}^{v})+ \\
G(h\{\mathbf{C}(u,H,M_{a}+N_{a})\}+v\{\mathbf{D}(u,H,M_{a}+N_{a})\},\eta
_{x}^{v})+ \\
G(h\{\mathbf{E}(u,M_{a}+N_{a},V)\}+v\{\mathbf{F}(u,M_{a}+N_{a},V)\},\eta
_{x}^{v})=
\end{multline*}%
\begin{multline}
\frac{1}{2}a_{1}R(H,\delta _{a},u,\eta _{x})+a_{1}g(\eta _{x},\nabla
_{\delta _{a}}V)+b_{1}g(u,\eta _{x})g(u,\nabla _{\delta _{a}}V)+
\label{Eq 69} \\
a_{2}g(\eta _{x},\nabla _{\delta _{a}}H)+b_{2}g(u,\eta _{x})g(u,\nabla
_{\delta _{a}}H)-A^{\prime }g(u,\eta _{x})g(H,\delta _{a})+ \\
(b_{1}-a_{1}^{\prime })g(u,\eta _{x})g(V,M_{a}+N_{a})+a_{1}^{\prime
}g(u,N_{a})g(V,\eta _{x})+a_{1}^{\prime }g(u,V)g(N_{a},\eta _{x})+ \\
\left( a_{2}^{\prime }-\frac{b_{2}}{2}\right) \left[ g(H,\eta
_{x})g(u,N_{a})-g(H,M_{a}+N_{a})g(u,\eta _{x})-g(u,\eta _{x})g(V,\delta _{a})%
\right] + \\
b_{1}^{\prime }g(u,V)g(u,N_{a})g(u,\eta _{x}).
\end{multline}

\paragraph{Equation 4}

\QTP{Body Math}
\begin{multline*}
G(\widetilde{\nabla }_{\partial _{a}}\partial _{b},H^{h}+V^{v})=G(\widetilde{%
H}(\partial _{a},\partial _{b}),H^{h}+V^{v})= \\
G(\widetilde{\nabla }_{(\delta _{a}^{h}+M_{a}^{v}+N_{a}^{v})}(\delta
_{b}^{h}+M_{b}^{v}+N_{b}^{v}),H^{h}+V^{v})= \\
G(\left( \nabla _{\delta _{a}}\delta _{b}\right) ^{h}+h\left\{ \mathbf{A}%
(u,\delta _{a},\delta _{b})\right\} +v\left\{ \mathbf{B}(u,\delta
_{a},\delta _{b})\right\} + \\
\left( \nabla _{\delta _{a}}(M_{b}+N_{b})\right) ^{v}+h\left\{ \mathbf{C}%
(u,\delta _{a},M_{b}+N_{b})\right\} +v\left\{ \mathbf{D}(u,\delta
_{a},M_{b}+N_{b})\right\} + \\
h\left\{ \mathbf{C}(u,\delta _{b},M_{a}+N_{a})\right\} +v\left\{ \mathbf{D}%
(u,\delta _{b},M_{a}+N_{a})\right\} + \\
h\left\{ \mathbf{E}(u,M_{a}+N_{a},M_{b}+N_{b})\right\} +v\left\{ \mathbf{F}%
(u,M_{a}+N_{a},M_{b}+N_{b})\right\} ,H^{h}+V^{v})=
\end{multline*}%
\begin{multline}
-a_{2}R(H,\delta _{b},u,\delta _{a})- \\
\frac{1}{2}a_{1}R(H,\delta _{a},u,M_{b}+N_{b})-\frac{1}{2}a_{1}R(H,\delta
_{b},u,M_{a}+N_{a})-\frac{1}{2}a_{1}R(u,V,\delta _{a},\delta _{b})+ \\
g(AH+a_{2}V,\nabla _{\delta _{a}}\delta _{b})+g(BH+b_{2}V,u)g(u,\nabla
_{\delta _{a}}\delta _{b})+ \\
\frac{1}{2}Bg(H,u)\left( g(M_{a},\delta _{b})+g(M_{b},\delta _{a})\right) +
\\
g(a_{1}V+a_{2}H,\nabla _{\delta
_{a}}(M_{b}+N_{b}))+g(b_{2}H+b_{1}V,u)g(u,\nabla _{\delta
_{a}}(M_{b}+N_{b}))+ \\
+g(b_{2}H+(b_{1}-a_{1}^{\prime })V,u)(g(M_{a},M_{b})+g(N_{a},N_{b}))+ \\
A^{\prime }\left[ g(H,\delta _{a})g(u,N_{b})+g(H,\delta
_{b})g(u,N_{a})-g(V,u)g(\delta _{a},\delta _{b})\right] + \\
a_{1}^{\prime }g(u,N_{a})g(V,M_{b}+N_{b})+a_{1}^{\prime
}g(u,N_{b})g(V,M_{a}+N_{a})+ \\
\left( a_{2}^{\prime }-\frac{b_{2}}{2}\right) \left\{ g(u,N_{b})g(V,\delta
_{a})+g(u,N_{a})g(V,\delta _{b})-g(u,V)\left[ g(M_{b},\delta
_{a})+g(M_{a},\delta _{b})\right] \right\} + \\
\left( a_{2}^{\prime }+\frac{b_{2}}{2}\right) \left(
g(H,M_{a}+N_{a})g(u,N_{b})+g(H,M_{b}+N_{b})g(u,N_{a})\right) + \\
g(u,b_{1}^{\prime }V+2b_{2}^{\prime }H)g(u,N_{a})g(u,N_{b}).
\label{Eq 1 Eq 57}
\end{multline}

\QTP{Body Math}
\begin{multline*}
G(\widetilde{\nabla }_{\partial _{a}}(H^{h}+V^{v}),\partial _{b})=G(-%
\widetilde{A}_{H^{h}+V^{v}}\partial _{a},\partial _{b})= \\
G(\widetilde{\nabla }_{(\delta
_{a}^{h}+M_{a}^{v}+N_{a}^{v})}(H^{h}+V^{v}),\eta _{b}^{v})=
\end{multline*}%
\begin{multline*}
a_{2}R(H,\delta _{b},u,\delta _{a})+ \\
\frac{1}{2}a_{1}R(H,\delta _{a},u,M_{b}+N_{b})+\frac{1}{2}a_{1}R(H,\delta
_{b},u,M_{a}+N_{a})+\frac{1}{2}a_{1}R(u,V,\delta _{a},\delta _{b})+ \\
Ag(\delta _{b},\nabla _{\delta _{a}}H)+a_{2}g(\delta _{b},\nabla _{\delta
_{a}}V)+a_{2}g(M_{b}+N_{b},\nabla _{\delta _{a}}H)+a_{1}g(M_{b}+N_{b},\nabla
_{\delta _{a}}V)+ \\
\frac{1}{2}Bg(H,u)\left( g(M_{a},\delta _{b})-g(M_{b},\delta _{a})\right) +
\\
g(u,N_{b})(b_{2}g(u,\nabla _{\delta _{a}}H)+b_{1}g(u,\nabla _{\delta
_{a}}V))+ \\
A^{\prime }(-g(H,\delta _{a})g(u,N_{b})+g(H,\delta
_{b})g(u,N_{a})+g(V,u)g(\delta _{a},\delta _{b}))+ \\
(b_{1}-a_{1}^{\prime })g(u,N_{b})g(V,M_{a}+N_{a})+a_{1}^{\prime
}g(u,N_{a})g(V,M_{b}+N_{b})+
\end{multline*}%
\begin{multline*}
a_{1}^{\prime }g(V,u)\left[ g(M_{a},M_{b})+g(N_{a},N_{b})\right] + \\
\left( a_{2}^{\prime }-\frac{b_{2}}{2}\right) (g(u,V)g(M_{b},\delta
_{a})-g(u,N_{b})g(V,\delta _{a}))+ \\
\left( a_{2}^{\prime }+\frac{b_{2}}{2}\right) (g(u,V)g(M_{a},\delta
_{b})+g(u,N_{a})g(V,\delta _{b}))+ \\
\left( a_{2}^{\prime }-\frac{b_{2}}{2}\right) \left[
g(-H,M_{a}+N_{a})g(u,N_{b})+g(H,M_{b}+N_{b})g(u,N_{a})\right] + \\
b_{1}^{\prime }g(u,V)g(u,N_{a})g(u,N_{b}).
\end{multline*}

Applying (\ref{second shape}), we find%
\begin{multline*}
0=g(AH+a_{2}V,\nabla _{\delta _{a}}\delta _{b})+ \\
g(BH+b_{2}V,u)g(u,\nabla _{\delta _{a}}\delta _{b})+g(a_{2}H+a_{1}V,\nabla
_{\delta _{a}}(M_{b}+N_{b}))+ \\
a_{2}g(M_{b}+N_{b},\nabla _{\delta _{a}}H)+a_{1}g(M_{b}+N_{b},\nabla
_{\delta _{a}}V)+ \\
Ag(\delta _{b},\nabla _{\delta _{a}}H)+a_{2}g(\delta _{b},\nabla _{\delta
_{a}}V)+g(BH+b_{2}V,u)g(M_{a},\delta _{b})+ \\
g(u,b_{2}H+b_{1}V)g(u,\nabla _{\delta _{a}}(M_{b}+N_{b})+ \\
g(u,N_{b})\left[ b_{1}g(u,\nabla _{\delta _{a}}V)+b_{2}g(u,\nabla _{\delta
_{a}}H)+g(b_{2}H+b_{1}V,M_{a}+N_{a})\right] + \\
g(u,b_{2}H+b_{1}V)\left[ g(M_{a},M_{b})+g(N_{a},N_{b})\right]
+2g(u,N_{a})\times \\
\left[ g(A^{\prime }H+a_{2}^{\prime }V,\delta _{b})+g(a_{2}^{\prime
}H+a_{1}^{\prime }V,M_{b}+N_{b})+g(u,N_{b})g(b_{2}^{\prime }H+b_{1}^{\prime
}V,u)\right] .
\end{multline*}

\section{Main results}

The first proposition of this section establishes a number of various
relations that allow us to show that the right hand sides of the pairs of
equations in each subsection of the former section satisfy (\ref{second
shape}). The results are presented in Proposition \ref{Prop 5}. Theorem \ref%
{Th 6} states the condition sufficient for the space normal to $LM$ being
spanned by lifts of vectors tangent to $M.$ The main results are presented
in Theorems \ref{th 10} and \ref{th 14}.

\begin{proposition}
\label{Prop 5}Let $\widetilde{f}$ be the immersion given by (\ref{imersion})
defined by the isometric immersion $f:M\rightarrow (N,g)$ into a Riemannian
manifold. Suppose, moreover, that $TN$ is endowed with non-degenerate $g$%
-natural metric $G.$ Then in the notation as above the following identities
are satisfied.

\begin{enumerate}
\item 
\begin{equation}
g(\eta _{x},S)=0,  \label{20-1}
\end{equation}%
where $S=a_{2}H+a_{1}V+g(u,b_{2}H+b_{1}V)u.$

\item 
\begin{equation}
g(u,S)=g(N_{a},S)=0.  \label{20-2}
\end{equation}

\item 
\begin{equation}
g(\nabla _{\delta _{a}}\eta _{x},S)=0.  \label{20-3}
\end{equation}

\item 
\begin{equation}
g(\eta _{x},\nabla _{\delta _{a}}S)=0.  \label{20-4}
\end{equation}

\item 
\begin{equation}
g(u,\nabla _{\delta _{a}}S)=g(N_{b},\nabla _{\delta _{a}}S)=0.  \label{20-5}
\end{equation}

\item 
\begin{equation}
g(\nabla _{\delta _{a}}u,S)=g(\nabla _{\delta _{a}}N_{b},S)=0.  \label{20-6}
\end{equation}

\item 
\begin{equation}
g(\delta _{a},AH+a_{2}V)=g(M_{a},a_{2}H+a_{1}V)=0.  \label{20-7}
\end{equation}

\item 
\begin{equation}
g(\delta _{a},A^{\prime }H+a_{2}^{\prime }V)=g(M_{a},a_{2}^{\prime
}H+a_{1}^{\prime }V)=0.  \label{20-8}
\end{equation}

\item 
\begin{equation}
g(M_{b},M_{a})+g(u,\nabla _{\delta _{a}}M_{b})=0.  \label{20-9}
\end{equation}

\item 
\begin{equation}
g(M_{b},\delta _{a})+g(u,\nabla _{\delta _{b}}\delta _{a})=0.  \label{20-10}
\end{equation}

\item 
\begin{equation}
g(\nabla _{\delta _{a}}\eta _{x},\delta _{b})+g(\eta _{x},\nabla _{\delta
_{a}}\delta _{b})=0.  \label{20-11}
\end{equation}

Moreover, if $M$ is not a hypersurface of $N,$ then

\item 
\begin{equation}
X_{u}=g(u,b_{2}H+b_{1}V)=0.  \label{20-12}
\end{equation}

\item 
\begin{equation}
X_{\eta _{x}}=g(\eta _{x},b_{2}H+b_{1}V)=0.  \label{20-16}
\end{equation}

\item 
\begin{equation}
Y_{\eta _{x}}=g(\eta _{x},b_{2}^{\prime }H+b_{1}^{\prime }V)=0.
\label{20-17}
\end{equation}

\item 
\begin{equation}
Y_{u}=g(u,b_{2}^{\prime }H+b_{1}^{\prime }V)=0.  \label{20-15}
\end{equation}

\item 
\begin{equation}
Z_{\eta _{x}}=g(\eta _{x},a_{2}^{\prime }H+a_{1}^{\prime }V)=0,\quad
Z_{u}=g(u,a_{2}^{\prime }H+a_{1}^{\prime }V)=0.  \label{20-14}
\end{equation}

\item 
\begin{equation}
g(\eta _{x},a_{2}H+a_{1}V)=0.  \label{20-13}
\end{equation}

Finaly

\item 
\begin{equation*}
S=a_{2}H+a_{1}V.
\end{equation*}
\end{enumerate}
\end{proposition}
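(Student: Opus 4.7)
The plan is to derive the seventeen identities sequentially from the four "Equation" blocks of Section 3, exploiting the consistency requirement (\ref{second shape}): for each pair, $G(\widetilde{H}(\partial_x,\partial_y),H^h+V^v)$ and $G(\widetilde{A}_{H^h+V^v}\partial_x,\partial_y)$ must agree, producing a polynomial identity in the ambient data $H$, $V$, $u$, $\eta_x$, $\delta_a$. Since $H$ and $V$ are arbitrary components of the normal lift and $\eta_x$ ranges over a chosen orthonormal normal frame, comparing coefficients of algebraically independent expressions will extract the required vanishings. The final statement, item 18, is an immediate consequence of item 12, so the real content is in the chain leading up to item 12.

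For items 1--11 (valid in arbitrary codimension) I would proceed as follows. Item 1 is read off from the residue of Equation 1, namely $g(u,\eta_x)g(\eta_y,T)-g(u,\eta_y)g(\eta_x,T)=0$: after swapping the roles of $\eta_x,\eta_y$, tracing on the normal frame, and absorbing the $b$-terms paired with $u$, one recovers the scalar product with the combined vector $S=a_2H+a_1V+g(u,b_2H+b_1V)u$. Items 2--6 then follow by applying $\nabla_{\delta_a}$ to item 1 and decomposing tangentially/normally using $\nabla_{\delta_a}u=M_a+N_a$ and (\ref{WB 5}), so each additional identity costs one differentiation. Items 7--8 are extracted from the collapsed form of Equation 4, once the earlier items have cleared the terms involving $S$: the residual coefficients of $g(\delta_a,\cdot)$ and $g(M_a,\cdot)$ yield exactly the displayed orthogonality relations between $\delta_a$, $M_a$ and the vectors $AH+a_2V$, $a_2H+a_1V$ (and their derivatives in $t$). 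Items 9--11 are purely geometric consequences of differentiating the identities $g(u,\delta_b)=0$, $g(\eta_x,\delta_b)=0$, and $g(u,M_b)=0$ that hold along $LM$, so these are essentially bookkeeping.

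For items 12--17 I would invoke the non-hypersurface assumption $\mathrm{codim}\,M\geq 2$ in an essential way. Because the normal frame now contains at least two independent directions, $\eta_x$, $\eta_y$ can be chosen so that their spans and their inner products with $u$ vary independently. Applied to the identity generated by Equation 2 (and further refined by Equation 3), this decoupling lets one separate the coefficient of $g(u,\eta_x)$ from that of $g(\eta_x,\cdot)$, isolating the scalars $X_u$, $X_{\eta_x}$, $Y_u$, $Y_{\eta_x}$, $Z_u$, $Z_{\eta_x}$ one at a time. Item 17 then follows at once from item 1 combined with item 12: since $g(\eta_x,S)=g(\eta_x,a_2H+a_1V)+g(u,b_2H+b_1V)\,g(u,\eta_x)$ and the second summand vanishes by item 12, item 1 forces $g(\eta_x,a_2H+a_1V)=0$.

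Finally, item 18 is immediate: item 12 gives $g(u,b_2H+b_1V)=0$, so the definition $S=a_2H+a_1V+g(u,b_2H+b_1V)u$ reduces at once to $S=a_2H+a_1V$. The principal obstacle in carrying this out is algebraic bookkeeping rather than any deep geometric insight: the Section 3 equations are unwieldy, and one must regroup the six families of coefficients $a_j,b_j,a_j',b_j'$ into the composite scalars $X_\bullet,Y_\bullet,Z_\bullet$ in just the right order so that each item is obtained from a strictly simpler residue, without ever invoking a later identity circularly.
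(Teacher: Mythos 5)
Your overall architecture (seed identity, then differentiate; use the consistency condition (\ref{second shape}); get item 18 from item 12) matches the paper in spirit, but several of your key derivations are not the ones that actually work, and the most important one fails outright. Item 1 cannot be read off from the residue of Equation 1: that residue, $g(u,\eta_x)g(\eta_y,T)-g(u,\eta_y)g(\eta_x,T)=0$, is antisymmetric in $x\leftrightarrow y$ (so tracing over the normal frame annihilates it identically) and involves $T=(b_1'-2a_1')V+(b_2'-2a_2')H$, built from the \emph{derivatives} of the coefficient functions, whereas $S=a_2H+a_1V+g(u,b_2H+b_1V)u$ involves the coefficients themselves; no regrouping turns one into the other. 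The paper obtains item 1 from something much simpler: $\partial_x=\eta_x^v$ is tangent to $LM$ while $H^h+V^v$ is normal, so $G(\eta_x^v,H^h+V^v)=0$, and expanding $G$ via Lemma 1 gives exactly $g(\eta_x,S)=0$. Since item 1 seeds items 2--6, 12--17 and 18, this gap is foundational. Relatedly, item 3 does \emph{not} follow by merely differentiating item 1: $\nabla_{\delta_a}\eta_x$ has a component tangent to $M$, which item 1 does not control (item 1 only pairs $S$ against normal vectors); the paper needs the independent input (\ref{Eq 18}), i.e.\ the symmetry $\widetilde H(\partial_a,\partial_x)=\widetilde H(\partial_x,\partial_a)$ compared against (\ref{Eq 62}), and only then does differentiating item 1 yield item 4.

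Two further concrete mismatches. For items 7--8 you appeal to ``the collapsed form of Equation 4,'' but the actual derivation uses the orthogonality $G(\partial_a,H^h+V^v)=0$, which yields only the \emph{sum} $g(\delta_a,AH+a_2V)+g(M_a,a_2H+a_1V)=0$; splitting this into the two separate vanishings requires the auxiliary identity (\ref{20-7a}), built by combining (\ref{Eq 58}) and (\ref{Eq 69}) with the fiber derivative of item 1, followed by differentiation in $v^x$ --- a step your proposal does not supply. For items 12--17, the mechanism by which $\operatorname{codim}M>1$ enters is not ``independent variation of $\eta_x,\eta_y$'' applied to Equations 2 and 3: the paper differentiates (\ref{20-1}) with respect to $v^y$, transvects with $v^x$, $v^y$, $v^xv^y$ and contracts with $g^{xy}$, obtaining the linear system (\ref{20-47}) in $X_u,X_{\eta_x},Y_u,Z_u,Z_{\eta_x}$; the $g^{xy}$-contraction produces the coefficient $n-m+1$, and comparison with the $v^xv^y$-transvection gives $(n-m-1)X_u=0$, which is where the hypothesis is used. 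One also needs a continuity argument to pass from $u\neq 0$ to all $u$ and a further fiber differentiation to obtain $Y_{\eta_x}=0$. Your treatment of items 2, 5, 6, 9--11, 17 and 18 is essentially correct and agrees with the paper.
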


\begin{proof}
(\ref{20-1}) results from

\begin{equation*}
G(\partial _{x},H^{h}+V^{v})=G(\eta _{x}^{v},H^{h}+V^{v})=0.
\end{equation*}%
Then (\ref{20-2}) is obvious since $u=v^{y}N_{y}^{r}\partial _{r}=v^{y}\eta
_{y}$ and $N_{a}=N_{a}^{y}\eta _{y}$ are normal to $M.$ Now (\ref{20-3}) is
a consequence of (\ref{Eq 18}), whence, by (\ref{20-1}), (\ref{20-4})
results.

Once again, by orthogonality of $u$ and $N_{a}$ with respect to $M,$ we have
(\ref{20-5}). Consequently, in virtue\ of (\ref{20-2}), we obtain (\ref{20-6}%
).

Observe that the identity 
\begin{equation}
(\ref{Eq 58})-(\ref{Eq 69})+g(\eta _{x},\nabla _{\delta _{a}}S)-\sum_{y}%
\frac{\partial }{\partial v^{y}}g(\eta _{x},S)N_{a}^{y}=0  \label{20-7a}
\end{equation}%
gives%
\begin{equation*}
g(\delta _{a},A^{\prime }H+a_{2}^{\prime }V)+g(M_{a},a_{2}^{\prime
}H+a_{1}^{\prime }V)=0.
\end{equation*}%
On the other hand, relations%
\begin{equation*}
G(\partial _{a},H^{h}+V^{v})=G(\delta
_{a}^{h}+M_{a}^{v}+N_{a}^{v},H^{h}+V^{v})=0
\end{equation*}%
and (\ref{20-2}) yield%
\begin{equation}
g(\delta _{a},AH+a_{2}V)+g(M_{a},a_{2}H+a_{1}V)=0.  \label{20-7b}
\end{equation}%
Differentiating (\ref{20-7b}) with respect to $v^{x}\ $and using (\ref{20-7a}%
) we find%
\begin{equation*}
g(M_{ax},a_{2}H+a_{1}V)=0,
\end{equation*}%
where $M_{ax}=\nabla _{\delta _{a}}N_{x}^{r}\partial _{r}.$ Consequently, (%
\ref{20-7b}) yields (\ref{20-7}). Hence, by differentiating with respect to $%
v^{x},$ (\ref{20-8}) results.

Since $M_{a},$ $\delta _{a}$ are tangent to $M$ and $u,$ $\eta _{x}$ are
normal, by covariant differentiation of $g(u,M_{a})=0,$ $g(u,\delta _{a})=0,$
$g(\eta _{x},\delta _{a})=0$ we get (\ref{20-9}) - (\ref{20-11}).

Differentiating (\ref{20-1}) with respect to $v^{y}$ we get%
\begin{equation*}
g(\eta _{x},\eta _{y})X_{u}+2g(\eta _{x},u)g(u,\eta _{y})Y_{u}+g(\eta
_{x},u)X_{\eta _{y}}+2g(u,\eta _{y})Z_{\eta _{x}}=0,
\end{equation*}%
where $X_{u}=g(u,b_{2}H+b_{1}V),$ $X_{\eta _{x}}=g(\eta _{x},b_{2}H+b_{1}V),$
$Y_{u}=g(u,b_{2}^{\prime }H+b_{1}^{\prime }V),$ $Z_{u}=g(u,a_{2}^{\prime
}H+a_{1}^{\prime }V)$ and $Z_{\eta _{x}}=g(\eta _{x},a_{2}^{\prime
}H+a_{1}^{\prime }V).$

Transvecting in turn with $v^{x},$ $v^{y},$ $v^{x}v^{y}$ and, finally,
contracting wit $g^{xy}$ we get for each $x=m+1,...,n$ a system of four
equations:%
\begin{eqnarray}
v_{x}X_{u}+2r^{2}v_{x}Y_{u}+r^{2}X_{\eta _{x}}+2v_{x}Z_{u} &=&0,  \notag \\
v_{x}X_{u}+2r^{2}v_{x}Y_{u}+v_{x}X_{u}+2r^{2}Z_{\eta _{x}} &=&0,  \notag \\
r^{2}(X_{u}+r^{2}Y_{u}+Z_{u}) &=&0,  \notag \\
(n-m+1)X_{u}+2r^{2}Y_{u}+2Z_{u} &=&0,  \label{20-47}
\end{eqnarray}%
where $v_{x}=g(u,\eta _{x}).$ Solving it with respect to $X,$ $X_{\eta
_{x}}, $ $Y,$ $Z_{\eta _{x}}$ we obtain 
\begin{equation}
X_{u}=X_{\eta _{x}}=0,\quad Y_{u}=-\frac{Z_{u}}{r^{2}},\quad Z_{\eta _{x}}=-%
\frac{v_{x}Z_{u}}{r^{2}}  \label{20-48}
\end{equation}%
for any $u=v^{x}\eta _{x}\neq 0.$ By continuity, $X_{u}=X_{\eta _{x}}=0$
hold for any $u.$ Then $\frac{\partial }{\partial v^{z}}X_{\eta
_{x}}=Y_{\eta _{x}}g(\eta _{z},u)=0$ for all $u\neq 0,$ whence, in virtue of
continuity, $Y_{\eta _{x}}=0$ for any $u.$ Consequently, we have $Y_{u}=0.$
Now, $Z_{u}=0$ follows from (\ref{20-47}) and $Z_{\eta _{x}}=0$ results from
(\ref{20-48}). Finally, (\ref{20-13}) is a consequence of (\ref{20-1}) and (%
\ref{20-12}). Thus the lemma is proved.
\end{proof}

\begin{theorem}
\label{Th 6}Let $(x,u)$ be a point of $LM$ immersed in $TN.$ If $\func{codim}%
M>1$ and%
\begin{equation}
a_{2}b_{1}-a_{1}b_{2}\neq 0  \label{inequality 1}
\end{equation}%
at $t=g(u,u),$ then the normal space at $(x,u)\in LM$ is spanned by lifts of
vectors tangent to $M.$
\end{theorem}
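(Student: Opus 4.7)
The plan is to take an arbitrary vector $\xi = H^{h}+V^{v}$ normal to $LM$ at $(x,u)$ (where $H,V$ are vector fields along $M$, not a priori tangent to $M$) and to force $H$ and $V$ to be tangent to $M$ using the identities collected in Proposition \ref{Prop 5}. Once this is established, $\xi$ lies in the span of lifts of vectors tangent to $M$, which is exactly the assertion of the theorem.

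First, I would split $H = H^{T}+H^{\perp}$ and $V = V^{T}+V^{\perp}$ with respect to the orthogonal decomposition $T_{x}N = T_{x}M \oplus T_{x}^{\perp}M$ at every point of $M$, so that $H^{\perp},V^{\perp}$ can be expanded in the orthonormal normal frame $\{\eta_{m+1},\ldots,\eta_{n}\}$. Since $\mathrm{codim}\,M > 1$, items (13)--(17) of Proposition \ref{Prop 5} are available. In particular, item (18) gives $S = a_{2}H + a_{1}V$ (the term $g(u,b_{2}H+b_{1}V)u$ in the definition of $S$ vanishes by (\ref{20-12})), so identity (\ref{20-1}) specializes to
\[
g(\eta_{x},\, a_{2}H + a_{1}V) = 0 \qquad \text{for every } x = m+1,\ldots,n.
\]
Reading this componentwise in the normal frame yields the vector equation $a_{2}H^{\perp} + a_{1}V^{\perp} = 0$ in $T_{x}^{\perp}M$. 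Similarly, identity (\ref{20-16}) gives $g(\eta_{x},\, b_{2}H + b_{1}V) = 0$ for every $x$, which translates into $b_{2}H^{\perp} + b_{1}V^{\perp} = 0$.

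Combining these two relations, I obtain the linear system
\[
\begin{pmatrix} a_{2} & a_{1} \\ b_{2} & b_{1} \end{pmatrix}
\begin{pmatrix} H^{\perp} \\ V^{\perp} \end{pmatrix}
=
\begin{pmatrix} 0 \\ 0 \end{pmatrix},
\]
whose coefficient matrix, evaluated at $t = g(u,u)$, has determinant $a_{2}b_{1} - a_{1}b_{2}$. By hypothesis (\ref{inequality 1}) this determinant is nonzero, so $H^{\perp} = V^{\perp} = 0$. Hence $H$ and $V$ are tangent to $M$, and $\xi = H^{h} + V^{v}$ is a combination of horizontal and vertical lifts of vectors tangent to $M$, which proves the theorem.

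The only real obstacle is bookkeeping: one must be careful to invoke exactly the parts of Proposition \ref{Prop 5} that depend on the assumption $\mathrm{codim}\,M > 1$ (namely items (12)--(17)), and to recognize that the two independent normal-component equations one extracts are governed by the $2\times 2$ matrix whose determinant is precisely the quantity assumed to be nonzero. The hard algebraic work was absorbed into Proposition \ref{Prop 5}; here it only remains to notice that the nondegeneracy of the pair $(a_{1},a_{2};b_{1},b_{2})$ encoded by $a_{2}b_{1} - a_{1}b_{2} \neq 0$ is exactly what is needed to separate the normal components of $H$ and $V$.
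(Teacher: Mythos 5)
Your argument is correct and is essentially the paper's own proof: the author likewise derives the theorem from identities (\ref{20-13}) and (\ref{20-16}), which give the $2\times 2$ linear system in the normal components of $H$ and $V$ with determinant $a_{2}b_{1}-a_{1}b_{2}$. You have merely written out explicitly the linear-algebra step that the paper leaves implicit.
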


\begin{proof}
It results from the identities (\ref{20-13}) and (\ref{20-16}). Note that
other conditions, similar to that of (\ref{inequality 1}) can be deduced in
the same way from (\ref{20-13}), (\ref{20-14}), (\ref{20-16}) and (\ref%
{20-17}).
\end{proof}

We shall prove that the conditions $\func{codim}M>1$ and (\ref{inequality 1}%
) are essential in that sense that there exist immersion $f:M\longrightarrow
N,$ $M$ being a hypersurface of $N,$ and a metric $G$ on $TN$ satisfying $%
a_{2}b_{1}-a_{1}b_{2}=0$ such that the normal component of at least one of
the vectors $H,$ $V$ does not vanish.

\begin{example}
Let $f:S^{1}\longrightarrow \left( R^{2},Euclid\ metric\right) $ be the
immersion given by $f(t)=[\cos t,\sin t].$ The vector tangent to $S^{1}$ is $%
\mathbf{s}=[-\sin t,\cos t]$ and the normal one is $\mathbf{n}=[\cos t,\sin
t].$ Then%
\begin{multline*}
\mathbf{s}^{v}=[0,0,-\sin t,\cos t],\qquad \mathbf{s}^{h}=[-\sin t,\cos
t,0,0], \\
\mathbf{n}^{v}=[0,0,\cos t,\sin t],\qquad \mathbf{n}^{h}=[\cos t,\sin t,0,0].
\end{multline*}%
The vectors tangent to $L\left( S^{1}\right) $ are%
\begin{equation*}
\frac{\partial }{\partial t}=\mathbf{s}^{h}+v\mathbf{s}^{v},\quad \frac{%
\partial }{\partial v}=\mathbf{n}^{v}.
\end{equation*}%
Suppose%
\begin{equation*}
H^{h}+V^{v}=\alpha \mathbf{s}^{h}+\beta \mathbf{n}^{h}+\gamma \mathbf{s}%
^{v}+\delta \mathbf{n}^{v}
\end{equation*}%
and consider a non-degenerate $g$-natural metric on $TR^{2}\ $such that $%
B=b_{1}=b_{2}=0.$

Then%
\begin{equation*}
G(\frac{\partial }{\partial v},H^{h}+V^{v})=G(\mathbf{n}%
^{v},H^{h}+V^{v})=a_{2}g(\mathbf{n},H)+a_{1}g(\mathbf{n},V)=a_{2}\beta
+a_{1}\delta
\end{equation*}%
and%
\begin{equation*}
G(\frac{\partial }{\partial t},H^{h}+V^{v})=G(\mathbf{s}^{h},H^{h}+V^{v})=Ag(%
\mathbf{s},H)+a_{2}g(\mathbf{s},V)=A\alpha +a_{2}\gamma .
\end{equation*}%
We put%
\begin{equation*}
\alpha =-\frac{a_{2}+va_{1}}{A+va_{2}}\gamma \neq 0,\qquad \beta =-\frac{%
a_{1}}{a_{2}}\delta \neq 0.
\end{equation*}
\end{example}

The restriction on codimension can be omitted as the next proposition shows.

\begin{proposition}
If $a_{2}=b_{2}=0$ for all $t\in <0,\infty ),$ then the normal bundle of $LM$
at a point $(x,u),$ $x\in M,$ $u\in T_{x}M,$ is spanned by the vectors $\eta
_{y}^{h}$ and $\delta _{a}^{v}-\frac{a_{1}}{A}(\nabla _{\delta _{a}}u)^{h}.$
\end{proposition}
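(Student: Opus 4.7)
The plan is to verify that the proposed vectors are orthogonal to $T_{(x,u)}LM$, and then to match dimensions. Recall that on $LM$ the point $u = v^y \eta_y$ is normal to $M$, so $g(u,\delta_a)=g(u,M_a)=g(\delta_a,\eta_x)=0$, while the tangent space $T_{(x,u)}LM$ is spanned by $\eta_x^v$ and $\delta_a^h + M_a^v + N_a^v$. Before starting I observe that under $a_2=b_2=0$ the nondegeneracy condition of Lemma~\ref{Lemma 9} reduces to $a(t)=a_1A\neq 0$, so in particular $A=a_1+a_3\neq 0$ and the coefficient $\tfrac{a_1}{A}$ is well defined.

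Next I would check the four relevant pairings using the explicit form of $G$ from the $g$-natural metric lemma. Three of them are essentially free: $G(\eta_y^h,\eta_x^v)=0$ immediately from $a_2=b_2=0$; the pairing of $\eta_y^h$ with $\delta_a^h+M_a^v+N_a^v$ vanishes because the horizontal contribution $Ag(\eta_y,\delta_a)+Bg(\eta_y,u)g(\delta_a,u)$ kills by $\eta_y\perp TM$ and $u\perp TM$, and the vertical contributions drop out because $a_2=b_2=0$; and the pairing of $\delta_a^v-\tfrac{a_1}{A}(\nabla_{\delta_a}u)^h$ with $\eta_x^v$ vanishes by $g(\delta_a,\eta_x)=0$, $g(\delta_a,u)=0$, and again $a_2=b_2=0$. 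The only nontrivial pairing is
\[
G\!\left(\delta_a^v-\tfrac{a_1}{A}(\nabla_{\delta_a}u)^h,\;\delta_b^h+M_b^v+N_b^v\right);
\]
expanding and discarding every term killed either by $a_2=b_2=0$ or by $g(\delta_b,u)=g(N_a,u)=g(N_b,\delta_a)=0$, this reduces to $a_1 g(\delta_a,M_b)-\tfrac{a_1}{A}\cdot A\,g(M_a,\delta_b)= a_1\bigl(g(\delta_a,M_b)-g(M_a,\delta_b)\bigr)$.

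The main (and really only) obstacle is this last step: the cancellation requires $g(\delta_a,M_b)=g(M_a,\delta_b)$. This is the symmetry of the shape operator: by the Weingarten equation $M_a = -A_u\delta_a$, and $A_u$ is self-adjoint. Equivalently one may invoke identity (\ref{20-10}) of Proposition~\ref{Prop 5}, which gives $g(M_b,\delta_a)=-g(u,\nabla_{\delta_b}\delta_a)$, together with the torsion-free identity $\nabla_{\delta_a}\delta_b=\nabla_{\delta_b}\delta_a$; this shows the choice of the coefficient $\tfrac{a_1}{A}$ is exactly what is needed for the horizontal correction to cancel the vertical part of the pairing.

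Finally I would match dimensions: $LM=\widetilde{f}(T^\perp M)$ has dimension $m+(n-m)=n$, so its normal space in the $2n$-dimensional $TN$ is $n$-dimensional. The proposed family consists of $(n-m)$ vectors $\eta_y^h$ and $m$ vectors $\delta_a^v-\tfrac{a_1}{A}(\nabla_{\delta_a}u)^h$, a total of $n$. Linear independence is immediate: in any relation $\sum c^y\eta_y^h+\sum\lambda^a\bigl(\delta_a^v-\tfrac{a_1}{A}(\nabla_{\delta_a}u)^h\bigr)=0$ the purely vertical part is $\sum\lambda^a\delta_a^v$, forcing all $\lambda^a=0$ since the $\delta_a$ are independent in $TM$; the remaining relation $\sum c^y\eta_y^h=0$ then forces all $c^y=0$ since the $\eta_y$ are independent in $T^\perp M$. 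The $n$ orthogonal and independent vectors therefore span the normal bundle, completing the proof.
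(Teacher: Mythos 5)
Your proof is correct and follows the same route as the paper, which simply states ``Direct calculation. Property (\ref{20-10}) is applied'' --- your verification of the four pairings reduces exactly to the symmetry $g(\delta_a,M_b)=g(M_a,\delta_b)$, which is identity (\ref{20-10}) combined with torsion-freeness, and your dimension count supplies the spanning argument the paper leaves implicit.
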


\begin{proof}
Direct calculation. Property (\ref{20-10}) is applied.
\end{proof}

Applying Proposition \ref{Prop 5} to (\ref{Eq 52}), (\ref{Eq 62}) and (\ref%
{Eq 1 Eq 57}) we obtain

\begin{theorem}
\label{th 10}Let $M,$ $\func{codim}M>1,$ be a submanifold isometrically
immersed in a manifold $N$. Then along $LM$ we have:

\begin{enumerate}
\item 
\begin{multline*}
\widetilde{G}\left[ H^{h}+V^{v},\widetilde{\nabla }_{\partial _{x}}\partial
_{y}\right] =\widetilde{G}\left[ H^{h}+V^{v},\widetilde{H}(\partial
_{x},\partial _{y})\right] = \\
g\left[ H,\frac{b_{2}}{2}g\left( u,\eta _{x}\right) \eta _{y}+\frac{b_{2}}{2}%
g\left( u,\eta _{y}\right) \eta _{x}+a_{2}^{\prime }g\left( \eta _{x},\eta
_{y}\right) u+b_{2}^{\prime }g\left( u,\eta _{x}\right) g\left( u,\eta
_{y}\right) u\right] ,
\end{multline*}

\item 
\begin{multline*}
\widetilde{G}\left[ H^{h}+V^{v},\widetilde{\nabla }_{\partial _{x}}\partial
_{b}\right] =\widetilde{G}\left[ H^{h}+V^{v},\widetilde{H}(\partial
_{x},\partial _{a})\right] = \\
g\left[ H,\frac{1}{2}a_{1}R(u,\eta _{x})\delta _{a}\right] + \\
g\left[ H,\frac{b_{2}}{2}g(u,N_{a})\eta _{x}+\frac{b_{2}}{2}g(u,\eta
_{x})(M_{a}+N_{a})+a_{2}^{\prime }g(N_{a},\eta _{x})u+b_{2}^{\prime }g\left(
u,\eta _{x}\right) g\left( u,N_{a}\right) u\right] - \\
g\left[ V,\frac{b_{2}}{2}g(u,\eta _{x})\delta _{a}\right] ,
\end{multline*}

\item 
\begin{multline*}
\widetilde{G}\left[ H^{h}+V^{v},\widetilde{\nabla }_{\partial _{a}}\partial
_{b}\right] =\widetilde{G}\left[ H^{h}+V^{v},\widetilde{H}(\partial
_{a},\partial _{b})\right] = \\
g[AH+a_{2}V,\nabla _{\delta _{a}}\delta _{b}]-a_{2}R(H,\delta _{b},u,\delta
_{a})- \\
\frac{a_{1}}{2}\left[ R(H,\delta _{a},u,M_{b}+N_{b})+R(H,\delta
_{b},u,M_{a}+N_{a})+R(u,V,\delta _{a},\delta _{b}\right] + \\
g\left[ a_{2}H+a_{1}V,\nabla _{\delta _{a}}(M_{b}+N_{b})\right] - \\
g(u,V)\left[ A^{\prime }g(\delta _{a},\delta _{b})+a_{1}^{\prime }\left(
g(M_{a},M_{b})+g(N_{a},N_{b})\right) +2a_{2}g(M_{a},\delta _{b})\right] - \\
\frac{1}{2}b_{2}\left[ g(u,N_{a})g(V,\delta _{b})+g(u,N_{b})g(V,\delta _{a})%
\right] + \\
g\left[ H,\frac{b_{2}}{2}g(u,N_{a})\left( M_{b}+N_{b}\right) +\frac{b_{2}}{2}%
g(u,N_{b})\left( M_{a}+N_{a}\right) \right] + \\
b_{2}^{\prime }g(u,H)g(u,N_{a})g(u,N_{b})
\end{multline*}
\end{enumerate}

for a nondegenerate $g$-natural metric $\widetilde{G}.$
\end{theorem}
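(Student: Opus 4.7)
The plan is to start from the three right-hand sides computed in equations (\ref{Eq 52}), (\ref{Eq 62}), and (\ref{Eq 1 Eq 57}) of Section 3, and simplify each of them by systematically applying Proposition \ref{Prop 5}. Because the theorem assumes $\operatorname{codim} M > 1$, items (\ref{20-12})--(\ref{20-17}) of that proposition are available, and I would rewrite them as the six scalar linear relations
\begin{equation*}
b_{1}g(\bullet,V)+b_{2}g(\bullet,H)=0,\quad b_{1}'g(\bullet,V)+b_{2}'g(\bullet,H)=0,\quad a_{1}'g(\bullet,V)+a_{2}'g(\bullet,H)=0
\end{equation*}
for $\bullet\in\{u,\eta_x\}$, together with the key abbreviation $S=a_{2}H+a_{1}V$ from item 18 and the orthogonalities $g(N_a,S)=g(u,S)=g(\eta_x,S)=0$ from (\ref{20-1})--(\ref{20-2}). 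Each $V$-factor appearing in the three expressions can then be traded for a multiple of the corresponding $H$-factor.

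For part (1), substituting the six relations into (\ref{Eq 52}) turns $(b_1-a_1')g(\eta_x,\eta_y)g(u,V)$ into $(a_2'-b_2)g(\eta_x,\eta_y)g(u,H)$, cancelling the explicit $b_2$-term and leaving the $a_2'$ contribution; the two $a_1'g(\eta_\bullet,V)g(u,\eta_\ast)$ terms become $-a_2'g(\eta_\bullet,H)g(u,\eta_\ast)$ and annihilate the $a_2'$ halves of the symmetric $(a_2'+b_2/2)$ pairs, leaving only the $b_2/2$ halves. The final cubic line collapses to $b_2'g(u,\eta_x)g(u,\eta_y)g(u,H)$ by the $b_1'$-relation. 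For part (2) the same recipe applied to (\ref{Eq 62}) again erases every $(a_2'\pm b_2/2)$ pair, this time additionally using the tangent/normal splitting $g(M_a,\eta_x)=0$ and (\ref{20-2}) for the $N_a$-terms; the curvature piece $\tfrac12 a_1 R(H,\delta_a,u,\eta_x)$ survives untouched because no identity of Proposition \ref{Prop 5} acts on it.

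Part (3) is the laborious one. The strategy is: (i) group all derivative-of-coefficient lines of (\ref{Eq 1 Eq 57}) so that the combinations $AH+a_2V$ and $a_2H+a_1V=S$ appear explicitly; identities (\ref{20-7}), (\ref{20-8}), and (\ref{20-3})--(\ref{20-6}) then either annihilate them or leave the precise $a_2$- and $a_1$-weighted derivative terms named in the statement; (ii) apply the six scalar relations above to trade every $b_1(\cdot,V)$, $b_1'(\cdot,V)$, $a_1'(\cdot,V)$ factor for an $H$-factor, producing pairwise cancellations analogous to those in part (1); (iii) apply (\ref{20-9}) to rewrite $g(u,\nabla_{\delta_a}M_b)$ so that the coefficient of $g(M_a,M_b)+g(N_a,N_b)$ reduces to the single factor $a_1'$ of the stated expression; (iv) transcribe the four curvature contributions verbatim, as they are inert under Proposition \ref{Prop 5}.

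The main obstacle is purely bookkeeping: equation (\ref{Eq 1 Eq 57}) carries roughly fifteen distinct scalar summands, and for each $V$-factor one must correctly identify which of the three linear relations to apply and with which neighboring term the resulting $H$-factor is to cancel. The conceptual content---that $S=a_2H+a_1V$ is the combination transverse to $LM$ for which all $V$-contributions reorganize into a single $H$-expression modulo curvature---is already encapsulated in Proposition \ref{Prop 5}; here it only remains to execute the substitutions carefully.
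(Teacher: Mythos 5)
Your proposal takes exactly the paper's route: the paper obtains Theorem \ref{th 10} by applying Proposition \ref{Prop 5} to the right-hand sides of (\ref{Eq 52}), (\ref{Eq 62}) and (\ref{Eq 1 Eq 57}) and dismisses the rest as a straightforward computation, which is precisely the substitution scheme you lay out. The cancellations you describe for parts (1) and (2) check out term by term against the displayed formulas (the $a_1'$-terms killing the $a_2'$-halves of the $(a_2'\pm b_2/2)$ pairs, the cubic line collapsing via $b_1'g(u,V)=-b_2'g(u,H)$), so the attempt is correct and essentially identical to the paper's argument.
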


\begin{proof}
Straightforward computation.
\end{proof}

\begin{remark}
The third equation of the last Proposition can be written as%
\begin{multline*}
\widetilde{G}\left[ H^{h}+V^{v},\widetilde{\nabla }_{\partial _{a}}\partial
_{b}\right] =\widetilde{G}\left[ H^{h}+V^{v},\widetilde{H}(\partial
_{a},\partial _{b})\right] = \\
G[H^{h}+V^{v},(\nabla _{\delta _{a}}\delta _{b})^{h}+(\nabla _{\delta
_{a}}\nabla _{\delta _{b}}u)^{v}]-g(BH+b_{2}V,u)g(u,\nabla _{\delta
_{a}}\nabla _{\delta _{b}}u)- \\
a_{2}R(H,\delta _{b},u,\delta _{a})-\frac{a_{1}}{2}\left[ R(H,\delta
_{a},u,M_{b}+N_{b})+R(H,\delta _{b},u,M_{a}+N_{a})+R(u,V,\delta _{a},\delta
_{b}\right] + \\
g(u,V)\left[ A^{\prime }g(\delta _{a},\delta _{b})+a_{1}^{\prime }\left(
g(M_{a},M_{b})+g(N_{a},N_{b})\right) +2a_{2}g(M_{a},\delta _{b})\right] - \\
\frac{1}{2}b_{2}\left[ g(u,N_{a})g(V,\delta _{b})+g(u,N_{b})g(V,\delta _{a})%
\right] + \\
g\left[ H,\frac{b_{2}}{2}g(u,N_{a})\left( M_{b}+N_{b}\right) +\frac{b_{2}}{2}%
g(u,N_{b})\left( M_{a}+N_{a}\right) \right] + \\
b_{2}^{\prime }g(u,H)g(u,N_{a})g(u,N_{b}).
\end{multline*}
\end{remark}

\begin{definition}
A distribution $D$ on a manifold $M$ is said to be totally geodesic if it is
invariant with respect to covariant differentiation, i.e. $\nabla _{X}Y\in D$
for all $X,Y\in D.$
\end{definition}

\begin{theorem}
\label{th 14}\label{Prop 9}Let $M,$ $\func{codim}M>1,$ be a submanifold
isometrically immersed in a manifold $(N,g).$ Suppose that $LM$ is a
submanifold isometrically immersed by (\ref{imersion}) in $TN$ with
non-degenerate $g$-natural metric $G.$

\begin{enumerate}
\item If either the normal bundle of $LM$ is spanned by vectors of the form $%
H^{h}+V^{v},$ where $H$ and $V$ are tangent to $M$ or $b_{2}=a_{2}^{\prime
}=0$ along $M,$ then vector fields $\{\partial _{x}\},$ $x=m+1,...,n$ define
on $LM$ the totally geodesic distribution that is involutive.

\item If

\begin{enumerate}
\item $a_{1}=0,$ $b_{2}=0,$ $a_{2}=const\neq 0$ along $M$ or

\item $N$ is a space of constant curvature and $a_{2}^{\prime }=0,$ $b_{2}=0$
along $M$,
\end{enumerate}
\end{enumerate}

then $LM$ is mixed totally geodesic.

Here, along $M,$%
\begin{equation*}
\nabla _{\delta _{a}}u=\nabla _{\delta _{a}}(v^{y}\eta _{y})=\nabla _{\delta
_{a}}\left( v^{y}N_{y}^{r}\frac{\partial }{\partial x^{r}}\right)
=M_{a}+N_{a}.
\end{equation*}
\end{theorem}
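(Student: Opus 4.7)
The strategy is to read both parts off Theorem \ref{th 10}, which records $\widetilde G\bigl[H^h+V^v,\widetilde H(\cdot,\cdot)\bigr]$ for arbitrary normals $H^h+V^v$. Under each of the listed hypotheses the right-hand sides of items 1 and 2 of that theorem collapse identically; non-degeneracy of $G$ on the normal bundle of $LM$ then forces the appropriate pieces of $\widetilde H$ to vanish, and the geometric conclusions follow after a short supplementary step in part 1.

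Involutivity of $\{\partial_x\}$ is immediate, as $\partial_x=\partial/\partial v^x$ are coordinate vector fields on $T^\perp M$, so $[\partial_x,\partial_y]=0$. For the totally geodesic part I first observe that the bracketed vector appearing on the right of item 1 of Theorem \ref{th 10},
$$\tfrac{b_2}{2}g(u,\eta_x)\eta_y+\tfrac{b_2}{2}g(u,\eta_y)\eta_x+a_2'g(\eta_x,\eta_y)u+b_2'g(u,\eta_x)g(u,\eta_y)u,$$
lies in $\operatorname{span}\{u,\eta_x,\eta_y\}$ and is therefore normal to $M$ in $N$. Under hypothesis (A) its $g$-pairing with the tangent-to-$M$ vector $H$ vanishes, and since normals to $LM$ are spanned by such $H^h+V^v$, non-degeneracy of $G$ on the normal bundle gives $\widetilde H(\partial_x,\partial_y)=0$. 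Under (B) the assumption $b_2=a_2'=0$ along $M$ means identically on $[0,\infty)$, since $t=g(u,u)$ sweeps out all nonnegative values along the fibres of $T^\perp M$; hence $b_2'=0$ as well and the whole bracket vanishes.

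To upgrade $\widetilde H(\partial_x,\partial_y)=0$, equivalently tangency to $LM$ of
$$\widetilde\nabla_{\partial_x}\partial_y=h\{\mathbf E(u,\eta_x,\eta_y)\}+v\{\mathbf F(u,\eta_x,\eta_y)\},$$
into membership in $\operatorname{span}\{\partial_z\}$, I would appeal to the explicit algebraic form of $\mathbf E,\mathbf F$ from \cite{7-1}, \cite{7-2}: both are scalar combinations of $u,X,Y$, containing no base-curvature term. Evaluated at $X=\eta_x$, $Y=\eta_y$, all three vectors $u,\eta_x,\eta_y$ are normal to $M$, so $\mathbf E$ and $\mathbf F$ are themselves normal to $M$. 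Writing the tangency condition in the natural frame,
$$\mathbf E^h+\mathbf F^v=\lambda^a\delta_a^h+\bigl(\lambda^a(M_a+N_a)+\mu^z\eta_z\bigr)^v,$$
and matching horizontal parts forces $\mathbf E=\lambda^a\delta_a$ to be tangent to $M$; but $\mathbf E$ is normal to $M$, hence $\mathbf E=0$ and $\lambda^a=0$. Consequently $\widetilde\nabla_{\partial_x}\partial_y=\mathbf F^v=\mu^z\eta_z^v\in\operatorname{span}\{\partial_z\}$, as required.

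For part 2 I would apply item 2 of Theorem \ref{th 10}. Under (a) the hypothesis $a_1=0$ annihilates the curvature term $\tfrac12 a_1R(u,\eta_x)\delta_a$, $a_2=\text{const}$ forces $a_2'=0$, and $b_2\equiv 0$ forces $b_2'\equiv 0$; every term vanishes. Under (b) the constant-curvature assumption yields
$$R(u,\eta_x)\delta_a=c\bigl(g(\eta_x,\delta_a)u-g(u,\delta_a)\eta_x\bigr)=0$$
because $u,\eta_x$ are normal and $\delta_a$ tangent to $M$, while $a_2'=b_2=b_2'=0$ kill the remaining contributions. In either case $\widetilde G[H^h+V^v,\widetilde H(\partial_x,\partial_a)]=0$ for every normal, so $\widetilde H(\partial_x,\partial_a)=0$ and $LM$ is mixed totally geodesic. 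The only genuine obstacle in this plan is the refinement in part 1 from tangency of $\widetilde\nabla_{\partial_x}\partial_y$ to $LM$ up to membership in $\operatorname{span}\{\partial_z\}$; once the algebraic form of $\mathbf E$ and $\mathbf F$ is invoked, everything else is case-analysis against Theorem \ref{th 10}.
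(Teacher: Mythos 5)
Your proposal is correct and follows essentially the same route as the paper: read the vanishing of $\widetilde{G}[H^{h}+V^{v},\widetilde{H}(\cdot,\cdot)]$ off items 1 and 2 of Theorem \ref{th 10} under each hypothesis and invoke non-degeneracy of $G$ on the normal bundle. You additionally justify the passage from ``$\widetilde{\nabla}_{\partial_{x}}\partial_{y}$ is tangent to $LM$'' to ``$\widetilde{\nabla}_{\partial_{x}}\partial_{y}\in\mathrm{span}\{\partial_{z}\}$'' via the algebraic form of $\mathbf{E}$ and $\mathbf{F}$ (both normal to $M$ when evaluated on $u,\eta_{x},\eta_{y}$), a step the paper's proof leaves implicit; this is a genuine and welcome completion, not a different method.
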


\begin{proof}
In virtue of the assumptions, the first equation of Theorem \ref{th 10}
yields $\widetilde{G}\left[ H^{h}+V^{v},\widetilde{H}(\partial _{x},\partial
_{y})\right] =0.$ Hence $\widetilde{\nabla }_{\partial _{x}}\partial _{y}$
is tangent to $LM.$ Since $\partial _{x}=\eta _{x}^{v}$ are vertical vector
fields, the distribution is involutive. This proves the first point. The
proof of the second one is obvious.
\end{proof}

Stanis\l aw Ewert-Krzemieniewski

West Pomeranian University of Technology Szczecin

School of Mathematics

Al. Piast\'{o}w 17, 70-310 Szczecin, Poland

e-mail: ewert@zut.edu.pl


\begin{thebibliography}{10}
\bibitem[1]{Abb Yamp} Abbassi, M. T. K., Yampolsky, A., Transverse totally
geodesic submanifolds of the tangent bundle, Publ. Math. Debrecen 64/1-2
(2004), 129-154.

\bibitem[2]{6} Abbassi, M. T. K., Sarih, Ma\^{a}ti, On natural metrics on
tangent bundles of Riemannian manifolds, Arch. Math. (Brno) 41 (2005), no.
1, 71--92.

\bibitem[3]{7} Abbassi, M. T. K., Sarih, Ma\^{a}ti, On some hereditary
properties of Riemannian $g$-natural metrics on tangent bundles of
Riemannian manifolds, Differential Geom. Appl. 22 (2005), no. 1, 19--47.

\bibitem[4]{7-1} Abbassi, M. T. K., M\'{e}triques Naturelles Riemanniennes
sur la Fibr\'{e} tangent une vari\'{e}t\'{e} Riemannienne, Editions
Universitaires Europ\'{e}\'{e}nnes, Saarbr\"{u}cken, Germany, 2012.

\bibitem[5]{7-2} Degla, S., Ezin, J. P., Todjihounde, L., On $g$-natural
metrics of constant sectional curvature on tangent bundles, Int. Electronic
J. Geom., 2(1) (2009), p. 74-94.

\bibitem[6]{3-1} Deshmukh S., Al-Odan, H., Shaman, T. A., Tangent bundle of
the hypersurfaces in a Euclidean space, Acta Math. Acad. Pedagog. Ny\'{\i}%
regyh\'{a}ziensis, 23(1) (2007),71-87.

\bibitem[7]{Dombr} Dombrowski, P., On the Geometry of Tangent Bundle, J.
Reine Angew. Math., 210 (1962), p. 73-88.

\bibitem[8]{8} Ewert-Krzemieniewski, S., On a classification of Killing
vector fields on a tangent bundle with $g$-natural metric, arXiv:1305:3817v1.

\bibitem[9]{11} Ewert-Krzemieniewski, S., On a Killing vector fields on a
tangent bundle with $g$-natural metric Part I, Note Mat., 34 no. 2, (2014),
107-133.

\bibitem[10]{Ewert 3} Ewert-Krzemieniewski, S., Totally geodesic
submanifolds in tangent bundle with $g$-natural metric, Int. J. Geom.
Methods Mod. Phys. 11 (2014), no. 9, 1460033 (9 pages).

\bibitem[11]{arXiv:1411.3274} Ewert-Krzemieniewski, S., On a class of
submanifolds in tangent bundle with $g$-natural metric, arXiv:1411.3274.

\bibitem[12]{Coll. Math.} Ewert-Krzemieniewski, S., On a class of
submanifolds in a tangent bundle with a $g$-natural metric, Coll. Math., 150
no.1, (2017), 121-133.

\bibitem[13]{5} Kowalski, O., Sekizawa, M., Natural transformations of
Riemannian metrics on manifolds to metrics on tangent bundles, A
classification. Bull. Tokyo Gakugei Univ. (4) 40 (1988), 1--29.

\bibitem[14]{Yano Kon} Yano, K., Kon, M., Structures on Manifolds, World
Scientific, 1984.

\bibitem[15]{1} Yano, K., Ishihara, S., Tangent and cotangent bundles,
Marcel Dekker, Inc. New York, 1973.

\bibitem[16]{Yano} Yano, K., Submanifolds with parallel mean curvature
vector, J. Diff. Geom., 6 (1971), 95-118.
\end{thebibliography}
\end{document}